\theoremstyle{plain}
\newtheorem{thm}{Theorem}[section]
\newtheorem{lem}[thm]{Lemma}
\newtheorem{prop}[thm]{Proposition}
\newtheorem{cor}[thm]{Corollary}
\newtheorem{rem}[thm]{Remark}
\theoremstyle{definition}
\newtheorem{defn}[thm]{Definition}
\newtheorem{exmp}[thm]{Example}
\newcommand{\la}{\lambda}
\newcommand{\tabincell}[2]{\begin{tabular}{@{}#1@{}}#2\end{tabular}}  
\numberwithin{equation}{section} \errorcontextlines=0
\begin{document}
\title{The Green polynomials via vertex operators}
\author{Naihuan Jing}
\address{Department of Mathematics, North Carolina State University, Raleigh, NC 27695, USA}
\email{jing@ncsu.edu}
\author{Ning Liu}
\address{School of Mathematics, South China University of Technology,
Guangzhou, Guangdong 510640, China}
\email{mathliu123@outlook.com}
\subjclass[2010]{Primary: 17B69, 05E10; Secondary: 05E05}\keywords{Green's polynomials, Hall-Littlewood polynomials, Hecke algebra}

\begin{abstract}
An iterative formula for the Green polynomial is given using the vertex operator realization of the Hall-Littlewood function.
Based on this, (1) a general combinatorial formula of the Green polynomial is given; (2) several compact formulas are given for
Green's polynomials associated with upper partitions of length $\leq 3$ and the diagonal lengths $\leq 3$;
(3) a Murnaghan-Nakayama type formula for the Green polynomial is obtained; and (4)
an iterative formula is derived for the bitrace of the finite general linear group $G$ and the Iwahori-Hecke algebra of type $A$ on the permutation module of $G$ by its Borel subgroup.
\end{abstract}

\maketitle

\section{Introduction}

The Green polynomials $Q^{\la}_{\mu}(q)$ were introduced by Green \cite{G} to compute irreducible characters of the finite general linear group
$\mathrm{GL}_n(\mathbb F_q)$. When $q=\infty$, they are exactly the irreducible character value $\chi^{\la}(C_{\mu})$ of the symmetric group $S_n$.
According to Hotta and Springer \cite{HS}, the $t^i$-coefficients $\psi^{\mu, i}$ of  $Q^{\la}_{\mu}(t)$ are certain characters of $S_n$
 that afford the $S_n$-action on the rational cohomology
$H^*(X_{\mu})$ of the variety $X_{\mu}$, the subvariety fixed by the unipotent elements of type $\mu$ of the flag variety.

The Green polynomial or its variant $X^{\la}_{\mu}(t)=t^{n(\mu)}Q^{\la}_{\mu}(t^{-1})$ is defined as the transition coefficient of the power-sum symmetric function $p_{\mu}$ in terms of the Hall-Littlewood symmetric function $P_{\la}(t)$ \cite{M}. Let $f_{\la\mu}^{\nu}(t)$ be the structure constants (Hall polynomial) of the Hall algebra generated by the $P_{\la}$. By Green's original definition, the Green polynomial can be written as a sum of products of lower degree Green polynomials with weights
$f_{\la\mu}^{\nu}(t)$:
\begin{equation}
Q^{\nu}_{\rho\cup\tau}(t)=\sum_{\la, \mu}f^{\nu}_{\la\mu}(t)Q^{\la}_{\rho}(t)Q^{\mu}_{\tau}(t).
\end{equation}
Based on this iteration, Green has given a table for $n\leq 5$. Morris \cite{Mor1} used an implicit iteration of the Kostka-Foulkes polynomial
to provide a table for $n=6, 7$.
As far as we know, no explicit formula is known for $Q^{\la}_{\mu}(t)$ in the general case.

Lascoux-Leclerc-Thibon \cite{LLT} has proved a formula (LLT) for the Green polynomials at roots of unity, conjectured by Morris-Sultana \cite{MS}. Morita \cite{Mo}
has generalized the LLT formula and given a formula for $Q^{\la}_{\mu}(\omega)$ for $\la$ being hook-shaped at a root of unity $\omega$.

Recently, Bryan and one of us \cite{BJ} have used vertex operators to derive a direct iterative formula for the Kostka-Foulkes polynomial.
In the first part of the paper,
we will use the same idea to give an iterative formula for the Green polynomials. Using this formula, we are able to
recover all previously known compact formulas for the Green polynomials and offer some new ones. First of all, we obtain
a general combinatorial formula: 
\begin{equation}
X^{\la}_{\mu}(t)=\sum_{\{\rho^i\}, \{\tau^i\}}\prod^{l(\la)-1}_{j=1}\frac{(-1)^{l(\rho^{(j)})}}{z_{\rho^{(j)}}(t)}
\end{equation}
summed over interlacing sequences $\{\tau^i\}$ and $\{\rho^i\}$ of partitions (see Theorem \ref{t:Formula}).
In particular, this also gives a combinatorial formula of the irreducible character value for the symmetric group
$S_n$ ($t=0$ or $q=\infty$).

Special cases of our general formula recover previously known formulas for $X^{\la}_{\mu}(t)$, In fact, our compact formula
$Q^{(k, 1^{n-k})}_{\mu}(t)$ for all values of $t$ recovers and generalizes Morita's formula. As examples, we
check that our formula recovers the known ones for $Q^{\la}_{(1^n)}(t)$ and $Q^{(1^n)}_{\mu}(t)$. Moreover, we also derive several general formulas for the Green polynomials, namely, compact formulas of $Q^{\la}_{\mu}(t)$ for $l(\mu)\leq 3$
as well as with Frobenius diagonal of $\mu$ $\leq 3$ are obtained.

Our method relies upon applications of dual vertex operators on the vertex operator realization of the Hall-Littlewood
functions \cite{Jing1} and straightening out the general Hall-Littlewood operators associated with compositions to those with
partitions. One application is to derive a Murnaghan-Nakayama formula for the Green polynomial in the general case.

The second part of the paper deals with an important application of our method. Let $G=\mathrm{GL}_n(\mathbb F_q)$ and $H_n(q)$
the Iwahori-Hecke algebra in type $A$. Let $B$ be the upper Borel subgroup of $G$, the algebra $H_n(q)$ is naturally realized via the
permutation module $\mathrm{Ind}^G_B 1$, where $1$ is the trivial $B$-module. In fact, this model also gives an alternative derivation
of the Frobenius character formula of $H_n(q)$ \cite{Ram}.

Note that the general linear group $G$ acts on $\mathrm{Ind}^G_B 1$ by left multiplication, which commutes with the natural action of
the Iwahori-Hecke algebra $H_n(q)$. By Green's theory the multiplicity of the irreducible $H_q(n)$-module appearing in $\mathrm{Ind}^G_B 1$ indexed by $\lambda$ is
controlled by the Kostka-Foulkes
polynomial $K_{\la\mu}(t)$, and the latter is exactly the irreducible character value $\chi^{\lambda}(u_{\mu})$ of $\mathrm{GL}_n(\mathbb F_q)$ at the unipotent element $u_{\mu}$ in type $\mu$.
Halverson and Ram \cite{HR} derived a combinatorial formula for the $(G, H_q(n))$-bitrace of the permutation module $\mathrm{Ind}^G_B 1$
using the Bruhat decomposition. In this paper, we will
derive an iterative formula for the bitrace on $\mathrm{Ind}^G_B 1$ and then a general formula for the bitrace.
Based on the iterative formula we also give a table of the bitrace for $n\leq 5$.

The paper is naturally divided into three parts. In Sect. \ref{S:HL} we first recall the vertex operator realization of the
Hall-Littlewood functions and express the Green polynomial $X^{\la}_{\mu}(t)$ as the transition coefficients between
the Hall basis and the power-sum basis.
Using the technique of vertex operators, we derive a useful iterative formula for $X^{\la}_{\mu}(t)$. Then we derive a general formula of
$X^{\la}_{\mu}(t)$ as well as several compact formulas in special cases. In Sect. \ref{S:MN} we derive a Murnaghan-Nakayama type formula for
the Green polynomial by using a straightening formula of the Hall-Littlewood functions indexed by compositions.
Finally in Sect. \ref{S:Bitr} we compute the bitrace of the finite general linear group $G$ and the Iwahori-Hecke algebra of type $A$ on the permutation module of $G$, and derive an iterative formula as well as the general formula. The paper is concluded with a table of the bitrace for $n\leq 5$.

\section{Vertex operator realization of Hall-Littlewood polynomials}\label{S:HL}

A composition $\lambda=(\lambda_1,\lambda_2,\ldots,\lambda_l)$, denoted by $\lambda\models n$,
is a sequence of nonnegative integers $\la_i$ (the parts) that sum up to $n$. If the sequence is
weakly decreasing, then $\la$ is called a partition and denoted as $\la\vdash n$.
The total sum $\sum_i\la_i=n$ is the weight of $\lambda$ and the number of (nonzero) parts is denoted by $l(\lambda)$. A partition $\lambda$ of weight $n$ is usually denoted by
$\lambda \vdash n$, and the
set of partitions will be denoted by $\mathcal P$. Sometimes
$\lambda$ is arranged in the ascending order: $\lambda=(1^{m_1}2^{m_2}\cdots)$ with $m_i$ being the multiplicity of $i$ in $\lambda$.
For partition $\lambda$, let $z_{\lambda}=\prod_{i\geq 1}i^{m_i(\lambda)}m_i(\lambda)!$ and denote
\begin{align}\label{e:zt}
z_{\lambda}(t)=\frac{z_{\lambda}}{\prod_{i\geq 1}(1-t^{\lambda_i})}
\end{align}

The Young or Ferrers diagram of partition $\la$ is the diagram of $l(\lambda)$ rows of boxes aligned to the left where the $i$th row consists
of $\la_i$ boxes. The partition $\la'=(\la_1', \ldots, \la_{\la_1}')$ corresponding to the reflection of
the Young diagram of $\la$ along the diagonal is called the dual partition of $\lambda$.

The juxtaposition $\la\cup\mu$ of partitions $\la$ and $\mu$ is defined as the union of all parts of $\la$ and $\mu$ and then
arranged in the descending order.

Let $\Lambda$ be the ring of symmetric functions over the ring of integers. Let $F=\mathbb Q(t)$ be the field of rational functions in $t$,
and we will be mainly working with the ring $V=\Lambda_F$. The space $\Lambda$ has several well-known bases indexed by partitions: elementary symmetric functions, monomial symmetric functions, homogeneous symmetric functions, and Schur functions. The set of power sum symmetric functions is a linear
basis of $\Lambda_{\mathbb Q}$. Here
the $n$th degree power-sum symmetric function $p_n=\sum_ix_i^n$, and the power sum function $p_{\lambda}=p_{\lambda_1}p_{\lambda_2}\cdots$.
Using the degree gradation, $V$ becomes a graded ring
\begin{align}
V=\bigoplus_{n=0}^{\infty} V_n.
\end{align}
A linear operator $A$ is of degree $n$ if $A(V_m)\subset V_{m+n}$.

The space $V$ is equipped with the Hall-Littlewood bilinear form $\langle\ , \ \rangle$ defined by
\begin{align}\label{e:form}
\langle p_{\lambda}, p_{\mu}\rangle=\delta_{\lambda\mu}z_{\lambda}(t).
\end{align}
As $\{z_{\lambda}(t)^{-1}p_{\lambda}\}$ is the dual basis of the power sum basis, the dual operator of the multiplication operator $p_n$
is
the differential operator $p_n^* =\frac{n}{(1-t^n)}\frac{\partial}{\partial p_n}$ of degree $-n$. Note that * is $\mathbb Q(t)$-linear and  anti-involutive satisfying
\begin{equation}
\langle H_nu, v\rangle=\langle u, H_n^*v\rangle
\end{equation}
for $u, v\in V$.

We now recall the vertex operator realization of the Hall-Littlewood symmetric functions from \cite{Jing1}.

The \textit{vertex operators}
$H(z)$ and its dual $H^*(z)$ are $t$-parameterized linear maps: $V\longrightarrow V[[z, z^{-1}]]$ defined by
\begin{align}
\label{e:hallop}
H(z)&=\mbox{exp} \left( \sum\limits_{n\geq 1} \dfrac{1-t^{n}}{n}p_nz^{n} \right) \mbox{exp} \left( -\sum \limits_{n\geq 1} \frac{\partial}{\partial p_n}z^{-n} \right)\\ \notag
&=\sum_{n\in\mathbb Z}H_nz^{n},\\
H^*(z)&=\mbox{exp} \left(-\sum\limits_{n\geq 1} \dfrac{1-t^{n}}{n}p_nz^{n} \right) \mbox{exp} \left(\sum \limits_{n\geq 1} \frac{\partial}{\partial p_n}z^{-n} \right)\\ \notag
&=\sum_{n\in\mathbb Z}H^*_nz^{-n}.
\end{align}
Here $V[[z, z^{-1}]]=F[[z, z^{-1}]]\otimes V$ is the vector space over formal Laurent series in $z$. The components $H_n$ and $H_{-n}^*$ are endomorphisms of $V$ with degree $n$, thus
$H_{-n}$ and $H_{n}^*$ are annihilation operators for $n>0$. We collect their relations as follows.

\begin{prop}\cite{Jing1} The operators $H_n$ and $H_n^*$ satisfy the following relations
\begin{align}\label{e:com1}
H_{m}H_n-tH_nH_m&=tH_{m+1}H_{n-1}-H_{n-1}H_{m+1},\\ \label{e:com2}
H^*_{m}H^*_n-tH^*_nH^*_m&=tH^*_{m-1}H^*_{n+1}-H^*_{n+1}H^*_{m-1},\\ \label{e:com3}
H_{m}H^*_n-tH^*_nH_m&=tH_{m-1}H^*_{n-1}-H^*_{n-1}H_{m-1}+(1-t)^2\delta_{m, n},\\ \label{e:com4}
H_{-n}. 1&=\delta_{n, 0}, \qquad H_{n}^{*}. 1=\delta_{n, 0},
\end{align}
where $\delta_{m, n}$ is the Kronecker delta function.
\end{prop}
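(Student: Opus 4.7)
The plan is to derive all four commutation relations by the standard vertex-operator manipulation: normal-order each of the four products $H(z)H(w)$, $H^*(z)H^*(w)$, $H(z)H^*(w)$, and $H^*(w)H(z)$ into a ``creation-times-annihilation'' exponential form multiplied by a scalar rational function, and then compare them. First, factor $H(z)=H_+(z)H_-(z)$ with
\[
H_+(z)=\exp\Bigl(\sum_{n\geq 1}\tfrac{1-t^n}{n}p_n z^n\Bigr),\qquad H_-(z)=\exp\Bigl(-\sum_{n\geq 1}\tfrac{\partial}{\partial p_n}z^{-n}\Bigr),
\]
and similarly $H^*(z)=H^*_+(z)H^*_-(z)$. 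Whenever an annihilation exponential passes a creation exponential, the only contribution comes from the Heisenberg relation $[\partial/\partial p_n,p_m]=\delta_{mn}$; summing the resulting geometric series $\sum_{n\geq 1}\tfrac{1-t^n}{n}(w/z)^n=\log\tfrac{1-tw/z}{1-w/z}$ via the Baker--Campbell--Hausdorff identity (the relevant commutator is central) yields
\begin{align*}
H(z)H(w) &= {:}H(z)H(w){:}\cdot\tfrac{z-w}{z-tw},\\
H^*(z)H^*(w) &= {:}H^*(z)H^*(w){:}\cdot\tfrac{z-w}{z-tw},\\
H(z)H^*(w) &= {:}H(z)H^*(w){:}\cdot\tfrac{z-tw}{z-w},\\
H^*(w)H(z) &= {:}H(z)H^*(w){:}\cdot\tfrac{w-tz}{w-z},
\end{align*}
where the first three rational factors are to be expanded in positive powers of $w/z$ and the last in positive powers of $z/w$.

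From these identities relations (2.5) and (2.6) follow by clearing denominators: $(z-tw)H(z)H(w)=(tz-w)H(w)H(z)$, and analogously for $H^*$, so extracting the coefficient of $z^m w^n$ (resp.\ $z^{-m}w^{-n}$) produces the two quadratic relations. For the mixed relation (2.7), multiply the third normal-ordered identity by $(w-tz)$ and the fourth by $(tw-z)$ and subtract. A short computation reveals that $P(z,w):=(tz-w)(z-tw)$ is symmetric in $z$ and $w$, so the difference becomes
\[
-{:}H(z)H^*(w){:}\cdot P(z,w)\,\bigl[\iota_{z,w}(z-w)^{-1}-\iota_{w,z}(z-w)^{-1}\bigr],
\]
where $\iota_{z,w}$ (resp.\ $\iota_{w,z}$) denotes expansion in positive powers of $w/z$ (resp.\ $z/w$). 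The bracketed difference is the formal delta distribution $w^{-1}\delta(z/w)$; the substitution rule $f(z,w)\delta(z/w)=f(w,w)\delta(z/w)$ together with the observation that ${:}H(w)H^*(w){:}=1$ (the creation pair and the annihilation pair each cancel at $z=w$) collapses the right-hand side to $(1-t)^2 w\,\delta(z/w)$. Extracting the coefficient of $z^m w^{-n}$ then yields (2.7). Finally, (2.8) is immediate: since $\exp(\pm\sum_{n\geq 1}\tfrac{\partial}{\partial p_n}z^{-n})\cdot 1=1$, both $H(z)\cdot 1$ and $H^*(z)\cdot 1$ reduce to pure creation exponentials which contain only non-negative powers of $z$ with constant term $1$.

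The main technical obstacle is careful bookkeeping of the two inequivalent formal expansions of $(z-w)^{-1}$ appearing in the mixed identity. One must resist the temptation to cancel the factor $(z-w)$ from both sides, because the discrepancy between the two expansions is precisely the delta distribution that produces the central term $(1-t)^2\delta_{m,n}$ in (2.7). Once this is tracked and the delta substitution is applied, extracting coefficients to verify each relation is mechanical.
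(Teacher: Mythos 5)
Your derivation is correct: the contraction factors $\frac{z-w}{z-tw}$, $\frac{z-tw}{z-w}$, $\frac{w-tz}{w-z}$ are right, the coefficient extraction reproduces \eqref{e:com1}--\eqref{e:com3} exactly (including the delta-function bookkeeping that produces the central term $(1-t)^2\delta_{m,n}$), and the vacuum relations \eqref{e:com4} follow as you say. The paper itself only cites \cite{Jing1} for this proposition, and your normal-ordering/operator-product-expansion argument is precisely the standard technique used there and in the rest of Section \ref{S:HL} (cf.\ \eqref{e:hallop1}--\eqref{e:hallop2}), so this is essentially the same approach.
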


We remark that the indexing of $H_m$ and $H^*_n$ is different from that of \cite{Jing1}, where $H_n$ was denoted as $H_{-n}$ for instance.

As the vacuum vector $1$ is annihilated by $p_n^*$, we have that
\begin{equation}\label{e:qfcn1}
H(z).1=\exp(\sum_{n=1}^{\infty}\frac{1-t^n}np_nz^n)=\sum_{n=0}^{\infty}q_nz^n
\end{equation}
where $q_n$ is a symmetric function of degree $n$ in $V$, called the Hall-Littlewood polynomial associated with one-row
partition $(n)$:
\begin{equation}\label{e:qfcn2}
q_n=H_n.1=\sum_{\lambda\vdash n}\frac1{z_{\lambda}(\lambda)}p_{\lambda}.
\end{equation}

The proposition implies that 
\begin{align} \label{e:com5}
H_{n}H_{n+1}&=tH_{n+1}H_{n},\\ \label{e:com6}
H_{n}^{*}H_{n-1}^{*}&=tH_{n-1}^{*}H_{n}^{*},\\ \label{e:com7a}
\langle H_n.1, H_n.1\rangle&=\sum_{\lambda\vdash n}\frac{1}{z_{\la}(t)}=1-t, \qquad n>0\\ \label{e:com7b}
\langle H_n.1, H^*_{-n}.1\rangle&=\sum_{\lambda\vdash n}\frac{(-1)^{l(\lambda)}}{z_{\la}(t)}=t^n-t^{n-1}, \qquad n>0
\end{align}
where the last two identities follow from \eqref{e:com3} and \eqref{e:com1} by induction.

Note that $H_n.1=q_n(t)$ can be generalized to all situations as the vertex operator realization of the Hall-Littlewood functions \cite{M}. For
each partition $\lambda$, denote $q_{\lambda}=q_{\lambda_1}q_{\lambda_2}\cdots$, then
$\{q_{\lambda}\}$ also forms a basis of $V$.
\begin{thm} \cite{Jing1} \label{t:HL} Let $\lambda=(\lambda_{1},\ldots ,\lambda_{l})$ be a partition.
The vertex operator products  $H_{\lambda_{1}}\cdots H_{\lambda_{l}}. 1$ is the
Hall-Littlewood function $Q_{\la}(t)$:
\begin{equation}\label{e:HL}
H_{\lambda_{1}}\cdots H_{\lambda_{l}}. 1=Q_{\la}(t)=
\prod\limits_{i<j} \dfrac{1-R_{ij}}{1-tR_{ij}}q_{\lambda_{1}}\cdots q_{\lambda_{l}}
\end{equation}
where the raising operator $R_{ij}q_{\la}=q_{(\la_{1},\ldots ,\la_{i}+1,\ldots ,\la_{j}-1,\ldots , \la_{l})}$. Moreover,
$H_{\lambda}.1=H_{\lambda_1}\cdots H_{\lambda_l}.1$ are orthogonal in $V$:
\begin{align}\label{e:orth}
\langle H_{\lambda}.1, H_{\mu}.1\rangle=\delta_{\lambda\mu}b_{\lambda}(t),
\end{align}
where $b_{\lambda}(t)=(1-t)^{l(\lambda)}\prod_{i\geqslant 1}[m_i(\lambda)]!$ and $[n]=\frac{1-t^n}{1-t}$.
\end{thm}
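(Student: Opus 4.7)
The plan is to prove the two claims in Theorem~\ref{t:HL} separately: the raising operator identity by normal-ordering the vertex operator generating function $H(z_1)\cdots H(z_l).1$, and the orthogonality by induction using the commutation relation~\eqref{e:com3}.

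\textbf{Raising operator formula.} Each vertex operator factors as $H(z)=H_+(z)H_-(z)$ into the creation and annihilation exponentials of~\eqref{e:hallop}. I would push every $H_-(z_i)$ rightward past all $H_+(z_j)$ with $i<j$ using the Baker--Campbell--Hausdorff identity $e^Ae^B=e^Be^Ae^{[A,B]}$, valid here because $[A,B]$ is a scalar arising from $[\partial_{p_n},p_m]=\delta_{nm}$. Evaluating the exponentiated bracket via $\sum_{n\geq 1}\frac{(1-t^n)x^n}{n}=\log\frac{1-tx}{1-x}$ produces a contraction factor $\frac{1-z_j/z_i}{1-tz_j/z_i}$ for each ordered pair. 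Since $H_-(z_i).1=1$ and by~\eqref{e:qfcn1},
\begin{equation*}
H(z_1)\cdots H(z_l).1=\prod_{1\leq i<j\leq l}\frac{1-z_j/z_i}{1-tz_j/z_i}\cdot\prod_{i=1}^l\sum_{m\geq 0}q_m z_i^m.
\end{equation*}
Extracting the coefficient of $z_1^{\lambda_1}\cdots z_l^{\lambda_l}$ and recognizing the formal substitution $z_j/z_i\mapsto R_{ij}$ gives the claimed raising operator expression, which is the standard presentation of $Q_\lambda(t)$.

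\textbf{Orthogonality.} I would induct on the weight $n=|\lambda|$, with base case~\eqref{e:com7a}. For the inductive step, the adjoint property yields
\begin{equation*}
\langle H_{\lambda_1}H_{\lambda_2}\cdots H_{\lambda_l}.1,\,H_{\mu_1}\cdots H_{\mu_k}.1\rangle=\langle H_{\lambda_2}\cdots H_{\lambda_l}.1,\,H^*_{\lambda_1}H_{\mu_1}\cdots H_{\mu_k}.1\rangle,
\end{equation*}
after which I would iteratively commute $H^*_{\lambda_1}$ rightward through each $H_{\mu_j}$ using~\eqref{e:com3}. Once $H^*_{\lambda_1}$ reaches the vacuum it vanishes by~\eqref{e:com4}, so only the diagonal contribution $(1-t)^2\delta_{\lambda_1,\mu_j}$ in~\eqref{e:com3} survives on that branch; it pairs off the $j$th factor and reduces the inner product to one of lower rank, to which the induction hypothesis applies. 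Summing over the positions $j$ with $\mu_j=\lambda_1$ introduces a multiplicity $m_{\lambda_1}(\mu)$, and the accumulated $(1-t)$ factors from all pairings build up to $b_\lambda(t)=(1-t)^{l(\lambda)}\prod_i[m_i(\lambda)]!$; non-diagonal matchings vanish because any mismatched part of $\lambda$ eventually becomes an $H^*_n$ annihilating $1$ with $n>0$.

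\textbf{Main obstacle.} The delicate point is controlling the ``cascade'' terms $tH_{m-1}H^*_{n-1}$ and $-H^*_{n-1}H_{m-1}$ in~\eqref{e:com3}: they shift indices at each commutation and proliferate with depth, so one must show that on the vacuum they telescope and only contribute when eventually triggering a $\delta$-pairing. A cleaner alternative would be to normal-order the two-variable generating function $\langle H(z_1)\cdots H(z_l).1,\,H(w_1)\cdots H(w_l).1\rangle$ by moving all $H^*$'s past all $H$'s at once, producing a Cauchy-type kernel whose coefficient extraction encodes the orthogonality and the normalization $b_\lambda(t)$ simultaneously.
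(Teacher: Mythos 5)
The paper does not prove this theorem: it is imported verbatim from \cite{Jing1}, so there is no internal proof to compare against. Your sketch is essentially the argument of that reference. The first half is sound: normal-ordering $H(z_1)\cdots H(z_l).1$ via $e^Ae^B=e^Be^Ae^{[A,B]}$ with the scalar bracket $[A,B]=-\sum_{n\ge1}\frac{1-t^n}{n}(z_j/z_i)^n$ gives exactly the kernel $\prod_{i<j}\frac{1-z_j/z_i}{1-tz_j/z_i}$, and extracting the coefficient of $z_1^{\lambda_1}\cdots z_l^{\lambda_l}$ from $\prod_i\sum_m q_mz_i^m$ reproduces the raising-operator expansion of $Q_\lambda(t)$ (with the usual conventions $q_0=1$, $q_{-m}=0$, and the rational functions of $R_{ij}$ expanded as power series). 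The soft spot is the orthogonality. As you yourself note, the induction through \eqref{e:com3} does not close as stated: each commutation spawns the shifted terms $tH_{m-1}H^*_{n-1}$ and $-H^*_{n-1}H_{m-1}$, and solving \eqref{e:com3} for $H^*_nH_m$ introduces a $t^{-1}$ and another left-sitting $H^*$ with lowered index, so the ``telescoping'' you assert is precisely the content that still needs proof; in particular the chain does not terminate cleanly because $H^*_0$ is not an annihilator. The honest ways to finish are either (i) the generating-function alternative you propose, computing $\langle H(z_1)\cdots H(z_l).1,\ H(w_1)\cdots H(w_k).1\rangle$ as a product of Cauchy-type kernels $\frac{1-tz_iw_j}{1-z_iw_j}$ times the two normal-ordering prefactors and then performing the coefficient extraction that yields $\delta_{\lambda\mu}b_\lambda(t)$ (this still requires a nontrivial residue computation), or (ii) simply observing that once $H_\lambda.1=Q_\lambda(t)$ is established, \eqref{e:orth} is the classical orthogonality of Hall--Littlewood functions \cite[III.(4.9)]{M}. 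With either of these replacing the unfinished cascade argument, your outline is a correct proof.
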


As a result, the transition matrix
between the bases $\{p_{\lambda}\}$ and $\{H_{\lambda}\}$ gives rise to Green's polynomials. More precisely,
for $\lambda,\mu\vdash n$, let $X^{\lambda}_{\mu}(t)$ be the coefficient of
$P_{\lambda}=b_{\lambda}(t)^{-1}Q_{\lambda}(t)$ in $p_{\mu}$:
\begin{equation}\label{e:greenp}
p_{\mu}=\sum\limits_{\lambda} X^{\lambda}_{\mu}(t)P_{\lambda}(t).
\end{equation}
It is known that $X^{\la}_{\mu}(t)$ is a polynomial in $t$ of degree $n(\lambda)$, and the \textit{Green polynomials} are defined as $Q^{\la}_{\mu}(t)=t^{n(\la)}X^{\lambda}_{\mu}(t^{-1})$
for all partitions $\lambda, \mu$ of the
same weight \cite{G, M}. In the following we simply regard  $X^{\lambda}_{\mu}(t)$ as Green's polynomials.

Using theorem \ref{t:HL}, we can write Green's polynomials as:
\begin{equation}\label{e:greenp2}
X^{\lambda}_{\mu}(t)= \langle H_{\la}.1, p_{\mu} \rangle.
\end{equation}
Thus $X^{\la}_{\mu}(t)=0$ unless $|\la|=|\mu|$, and $X^{(n)}_{\la}(t)
=\delta_{n, |\lambda|}$ by \eqref{e:qfcn2}.

Now let's discuss how to compute $X_{\lambda}^{\mu}(t)$. Consider the following linear
maps $V\longrightarrow V[[z, z^{-1}]]$:
\begin{align}\label{e:p1}
P(z)&=\sum_{n\geq 1}p_nz^{n},\\ \label{e:p2}
P^*(z)&=\sum_{n\geq 1}p^*_nz^{-n},
\end{align}
where the operator $p_n$ and the dual $p_{n}^*$ are of degree $n$ and $-n$ respectively.

The normal ordering of vertex operators are defined as usual, so 
\begin{align*}
:H^*(z)P(w):&=P(w)H^*(z),\\
:H(z)P^{*}(w):&=P^*(w)H(z).
\end{align*}

By the usual techniques of vertex operators, we have the following operator product expansions:
\begin{align}
\label{e:hallop1}
H^*(z)P(w)&=P(w)H^*(z)+H^*(z)\frac{w}{z-w}, 
\\ \label{e:hallop2}
P^{*}(z)H(w)&=H(w)P^{*}(z)+H(w)\frac{w}{z-w}.
\end{align}

Taking coefficients of the above expressions, we immediately get the following commutation relations.
\begin{prop}\label{p:rel1}
The commutation relations between the Hall-Littlewood vertex operators and power sum operators are:
\begin{align}\label{e:rel1}
H^{*}_{m}p_{n}=p_{n}H^{*}_{m}+H^{*}_{m-n},\\ \label{e:rel2}
p^{*}_{m}H_{n}=H_{n}p^{*}_{m}+H_{n-m}.
\end{align}
\end{prop}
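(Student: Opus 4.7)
The approach is to first establish the operator product expansions \eqref{e:hallop1} and \eqref{e:hallop2}, and then extract the appropriate Laurent coefficients in $z$ and $w$. The proposition says these follow from the OPEs, so the real content is deriving the OPEs from the definitions \eqref{e:hallop}, \eqref{e:p1}, \eqref{e:p2}.

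To derive \eqref{e:hallop1}, I would factor $H^*(z)=A(z)B(z)$, where
$A(z)=\exp\bigl(-\sum_{n\geq 1}\tfrac{1-t^n}{n}p_nz^n\bigr)$ and
$B(z)=\exp\bigl(\sum_{n\geq 1}\tfrac{\partial}{\partial p_n}z^{-n}\bigr)$.
Since $A(z)$ is a power series in the multiplication operators $p_k$, it commutes with each $p_n$. For the annihilation part $B(z)$, the canonical bracket $[\partial/\partial p_k,p_n]=\delta_{k,n}$ gives $[X(z),p_n]=z^{-n}$ with $X(z)=\sum_k\tfrac{\partial}{\partial p_k}z^{-k}$; because this commutator is a scalar, the Hadamard identity yields $B(z)p_n=(p_n+z^{-n})B(z)$. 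Reassembling, $H^*(z)p_n=p_nH^*(z)+z^{-n}H^*(z)$. Multiplying by $w^n$ and summing over $n\geq 1$ produces the geometric series $\sum_{n\geq 1}(w/z)^n=\tfrac{w}{z-w}$ (expanded in the region $|w|<|z|$), delivering \eqref{e:hallop1}. The expansion \eqref{e:hallop2} is obtained by a symmetric computation: now $H(w)$ plays the role of $A B$ and the commutator $[p^*_m,p_n]=\tfrac{n}{1-t^n}\delta_{m,n}$ combines with the creation part of $H(w)$ in exactly the dual manner.

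The coefficient extraction is then mechanical. Writing $H^*(z)=\sum_{m\in\mathbb Z}H^*_mz^{-m}$ and $P(w)=\sum_{n\geq 1}p_nw^n$, the coefficient of $z^{-m}w^n$ on the left side of \eqref{e:hallop1} is $H^*_mp_n$, on the first right-hand term is $p_nH^*_m$, and from $\tfrac{w}{z-w}H^*(z)=\sum_{k\geq 1}w^kz^{-k}\sum_\ell H^*_\ell z^{-\ell}$ one reads off the contribution $H^*_{m-n}$ (set $k=n$, $\ell+k=m$), yielding \eqref{e:rel1}. The analogous reading of \eqref{e:hallop2} gives \eqref{e:rel2}.

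The computation is essentially routine vertex-operator bookkeeping, so there is no single hard step. The only point deserving care is the choice of expansion region for $\tfrac{w}{z-w}$: one must take the positive-power expansion $\sum_{k\geq 1}(w/z)^k$, matching the fact that $P(w)$ carries only positive powers of $w$ while $H^*(z)$ ranges over all integer powers of $z$. Any other choice would produce a different (and incorrect) commutator.
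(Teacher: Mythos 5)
Your proposal is correct and follows exactly the route the paper intends: derive the operator product expansions \eqref{e:hallop1}--\eqref{e:hallop2} by splitting the vertex operator into its creation and annihilation exponentials, commute past $p_n$ (resp.\ $p_m^*$) using the scalar bracket, sum the geometric series $\sum_{k\geq 1}(w/z)^k=\frac{w}{z-w}$, and read off the coefficient of $z^{-m}w^n$. Your coefficient bookkeeping and your remark about the expansion region $|w|<|z|$ are both accurate, so this is a faithful (and more detailed) version of the paper's own argument.
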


To proceed we need some notations. For each partition $\la=(\la_1, \ldots, \la_l)$, we define that
\begin{align}
\la^{[i]}=(\la_{i+1}, \cdots, \la_l), \qquad i=0, 1, \ldots, l
\end{align}
So $\la^{[0]}=\la$ and $\la^{[l]}=\emptyset$. We define a subpartition $\tau$ of $\la$,
denoted $\tau \lhd \lambda$, if the parts of $\tau$ are
also parts of $\lambda$, i.e. $\tau=(\lambda_{i_{1}}, \ldots, \lambda_{i_{s}})$ for some $1\leq i_1<\cdots<i_s\leq l$.
Note that $\tau$ could be $\emptyset$ or $\lambda$.

Let $D^{(i)}(\la)$ be the number of subpartitions of $\la$ with weight $i$, then the generating function
of subparitions of $\lambda$ is given by
\begin{align}\notag
D_t(\la)&=\sum_{i\geq 0}D^{(i)}(\lambda)t^i=\sum_{\tau\lhd\la}t^{|\tau|}\\\label{e:partial}
&=(1+t)^{m_1(\la)}(1+t^2)^{m_2(\la)}\cdots=\prod_{i\geq 1}[2]_{t^i}^{m_i(\la)}.
\end{align}
In particular, the total number of subpartitions of $\la$ is $2^{l(\la)}$.

\begin{thm}\label{t:iterative}
For partition $\mu\vdash n$ with $l(\mu)=l$ and integer $k$,
\begin{align}\label{e:iterative}
H_{k}^{*}p_{\mu}&=\sum\limits_{\tau \lhd \mu} p_{\tau}H^{*}_{k+\mid \tau \mid-n}=\sum\limits_{i=0}^{n-k}\sum_{\tau\lhd \mu,\tau\vdash i}p_{\tau}H^{*}_{k+i-n}\\\label{e:pH}
p^{*}_{k}H_{\mu}&=\sum\limits_{i=1}^{l}H_{\mu_{1}}\ldots H_{\mu_{i}-k}\ldots H_{\mu_{l}}.
\end{align}
\end{thm}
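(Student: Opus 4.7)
The plan is to iterate the single-step commutation relations \eqref{e:rel1} and \eqref{e:rel2} of Proposition \ref{p:rel1} and bookkeep the resulting $2^l$ terms so that they match the stated combinatorial indexing.

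For \eqref{e:iterative}, I would factor $p_\mu = p_{\mu_1} p_{\mu_2} \cdots p_{\mu_l}$ and push $H_k^*$ past each $p_{\mu_i}$ from left to right. At each step the relation $H_m^* p_{\mu_i} = p_{\mu_i} H_m^* + H_{m-\mu_i}^*$ offers a binary choice: \emph{keep} the factor $p_{\mu_i}$ in place and continue commuting, or \emph{absorb} it by dropping $p_{\mu_i}$ and decrementing the index of $H^*$ by $\mu_i$. After $l$ such steps, each of the $2^l$ leaves of this binary tree yields a term of the form $p_\tau H^*_{k-(n-|\tau|)} = p_\tau H^*_{k+|\tau|-n}$, where $\tau \lhd \mu$ consists of the parts that were kept and the absorbed parts account for the remaining weight $n - |\tau|$ in the index. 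Since the total number of (indexed) subpartitions of $\mu$ is $2^{l(\mu)}$, cf.\ the remark after \eqref{e:partial}, this correspondence accounts for every summand. A cleaner way to formalize this is induction on $l(\mu)$: the base case is \eqref{e:rel1} itself, and the inductive step splits
\begin{equation*}
H_k^* p_{\mu_1} p_{\mu^{[1]}} = p_{\mu_1}\bigl(H_k^* p_{\mu^{[1]}}\bigr) + H_{k-\mu_1}^* p_{\mu^{[1]}},
\end{equation*}
after which applying the inductive hypothesis to each summand groups the subpartitions of $\mu$ into those that include $\mu_1$ and those that do not.

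For \eqref{e:pH}, the same strategy applied to $p_k^* H_n = H_n p_k^* + H_{n-k}$ yields, after pushing $p_k^*$ all the way to the right of $H_{\mu_1} \cdots H_{\mu_l}$,
\begin{equation*}
p_k^* H_\mu = H_{\mu_1}\cdots H_{\mu_l}\, p_k^* + \sum_{i=1}^{l} H_{\mu_1}\cdots H_{\mu_i - k}\cdots H_{\mu_l}.
\end{equation*}
The stated identity drops the first summand, which is consistent as soon as both sides are read as applied to the vacuum $1$: since $p_k^* = \frac{k}{1-t^k}\partial/\partial p_k$ annihilates constants for $k \geq 1$, the carry-through term $H_\mu p_k^*.1$ vanishes, leaving exactly the right-hand side of \eqref{e:pH}.

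The main obstacle is not analytical but combinatorial bookkeeping: verifying that the $2^l$ branches of the commutation tree in the first identity are in bijection with $\{\tau : \tau \lhd \mu\}$ and that each branch produces the correct shifted index $k + |\tau| - n$. Once that bijection is explicit (and the vacuum convention for the second identity is fixed), both formulas follow directly from Proposition \ref{p:rel1} without any further vertex-operator input.
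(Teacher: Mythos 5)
Your proposal is correct and follows essentially the same route as the paper: identity \eqref{e:iterative} is proved there by exactly your induction on $l(\mu)$ using \eqref{e:rel1}, splitting subpartitions according to whether they contain $\mu_1$. For \eqref{e:pH} the paper merely says it ``follows by taking $*$,'' whereas you iterate \eqref{e:rel2} directly and, usefully, make explicit the point the paper leaves implicit --- that the operator identity carries an extra term $H_{\mu}p_k^*$ which disappears only because the formula is read on the vacuum (equivalently, $H_\mu$ stands for $H_\mu.1$), which is indeed how the identity is used later in the paper.
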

\begin{proof} The second relation \eqref{e:pH} follows from \eqref{e:iterative} by taking $*$.
We argue by induction on $l(\lambda)$ for the first relation. The initial step is clear. Now assume that
\eqref{e:iterative} holds for any partition with length $<l(\lambda)$, so
it follows from Proposition \ref{p:rel1} and induction hypothesis that
\begin{align*}
H_{k}^{*}p_{\lambda}&=p_{\lambda_{1}}H^{*}_{k}p_{\lambda_{2}}\cdots p_{\lambda_{l}}+H^{*}_{k-\lambda_{1}}p_{\lambda_{2}}\cdots p_{\lambda_{l}}\\
&=p_{\lambda_{1}}\sum\limits_{\rho \lhd \lambda^{[1]}} p_{\rho}H^{*}_{k+\lambda_{1}+\mid \tau \mid-n}+\sum\limits_{\rho \lhd \lambda^{[1]}} p_{\rho}H^{*}_{k+\mid \rho \mid-n}\\
&=\sum\limits_{\tau \lhd \lambda} p_{\tau}H^{*}_{k+\mid \tau \mid-n}.
\end{align*}
\end{proof}

Note that when $n>|\lambda|$ (cf. \eqref{e:iterative})
$$ H^*_np_{\lambda_{1}}p_{\lambda_{2}}\cdots p_{\lambda_{l}}=0.$$

To effectively use our result, let us also compute the following symmetric function in $V$. For $n\geq 0$, we
have $H_{n}^*.1=\delta_{n, 0}$ and
\begin{equation}\label{e:dualq}
H^*_{-n}.1=\sum_{\lambda\vdash n}\frac{(-1)^{l(\lambda)}}{z_{\lambda}(t)}p_{\lambda},
\end{equation}
which implies that $\langle p_{\lambda}, H^*_{-n}.1\rangle=(-1)^{l(\lambda)}\delta_{n, |\lambda|}$.

\begin{exmp}
\label{X(42)} Using Theorem \ref{t:iterative} and \eqref{e:com7b} we can easily compute some Green's polynomials.
\begin{align*}
X^{(42)}_{(2^{2}1^{2})}(t)
&=\langle p_{2}p_{2}p_{1}p_{1}, H_{4}H_{2}.1\rangle\\
&=\langle H^{*}_{4}p_{2}p_{2}p_{1}p_{1}, H_{2}.1\rangle \\
&=2\langle p_{2}, H_{2}.1\rangle +\langle H^{*}_{-2}.1, H_{2}.1\rangle +2\langle p_{1}H^{*}_{-1}.1, H_{2}.1\rangle +\langle p_{1}p_{1}, H_{2}.1\rangle\\
&=2+t^{2}-t+2(t-1)+1\\
&=t^{2}+t+1.
\end{align*}
\end{exmp}

\begin{thm}
\label{Recurrence Formula}
For partition $\lambda,\mu\vdash n$,
\begin{align}\notag
X^{\la}_{\mu}(t)&=\sum_{\mbox{\tiny$\begin{array}{c}
\tau\lhd \mu\\ |\tau|\leq n-\la_1\end{array}$}}\sum_{\rho\vdash |\la^{[1]}|-|\tau|}\frac{(-1)^{l(\rho)}}{z_{\rho}(t)}X^{\la^{[1]}}_{\tau\cup\rho}(t)\\\label{e:Recurrence}
&=\sum\limits_{i=0}^{n-\la_{1}}\sum_{\mbox{\tiny$\begin{array}{c}
\tau\lhd \mu\\
\tau\vdash i\end{array}$}}\sum\limits_{\rho\vdash(n-\la_{1}-i)}\frac{(-1)^{l(\rho)}}{z_{\rho}(t)}X^{\la^{[1]}}_{\tau\cup \rho}(t).
\end{align}
\end{thm}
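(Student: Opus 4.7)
The plan is to combine the inner product realization $X^{\la}_{\mu}(t)=\langle H_{\la}.1,\,p_{\mu}\rangle$ from \eqref{e:greenp2} with the iterative formula \eqref{e:iterative} of Theorem \ref{t:iterative}. First I would peel off the leading part $H_{\la_{1}}$ from the vertex operator product $H_{\la}.1 = H_{\la_{1}}H_{\la^{[1]}}.1$ and move it to the other side of the pairing using the adjoint property, getting
\begin{equation*}
X^{\la}_{\mu}(t)=\langle H_{\la^{[1]}}.1,\,H^{*}_{\la_{1}}p_{\mu}\rangle.
\end{equation*}

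Next, I would apply Theorem \ref{t:iterative} directly to rewrite
\begin{equation*}
H^{*}_{\la_{1}}p_{\mu}=\sum_{\tau\lhd\mu}p_{\tau}\,H^{*}_{\la_{1}+|\tau|-n}.1,
\end{equation*}
and then evaluate $H^{*}_{\la_{1}+|\tau|-n}.1$ using \eqref{e:com4} and \eqref{e:dualq}: when $\la_{1}+|\tau|-n>0$ the result vanishes, which forces $|\tau|\leq n-\la_{1}$; when $\la_{1}+|\tau|-n\leq 0$ we obtain
\begin{equation*}
H^{*}_{\la_{1}+|\tau|-n}.1=\sum_{\rho\vdash n-\la_{1}-|\tau|}\frac{(-1)^{l(\rho)}}{z_{\rho}(t)}p_{\rho},
\end{equation*}
where the convention $\rho=\emptyset$ covers the equality case $|\tau|=n-\la_{1}$.

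Substituting these power-sum expansions and using $p_{\tau}\cdot p_{\rho}=p_{\tau\cup\rho}$, I would collect the pairings as
\begin{equation*}
\langle H_{\la^{[1]}}.1,\,p_{\tau\cup\rho}\rangle=X^{\la^{[1]}}_{\tau\cup\rho}(t),
\end{equation*}
again via \eqref{e:greenp2}. Summing over the admissible $\tau\lhd\mu$ with $|\tau|\leq n-\la_{1}$ and $\rho\vdash|\la^{[1]}|-|\tau|=n-\la_{1}-|\tau|$ yields the first equality of \eqref{e:Recurrence}, and refining the outer sum by $i=|\tau|$ gives the second form.

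There is no real obstacle here beyond careful bookkeeping: the only subtle point is justifying the summation range $|\tau|\leq n-\la_{1}$, which comes for free from the vanishing $H^{*}_{k}.1=0$ for $k>0$. All the heavy lifting was done in Theorem \ref{t:iterative} and in the computation of $H^{*}_{-m}.1$ in \eqref{e:dualq}; once those are in hand, the recurrence follows by a single adjoint manipulation.
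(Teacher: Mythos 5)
Your proof is correct and follows exactly the route the paper intends: the paper's own (one-line) proof cites precisely \eqref{e:iterative} and \eqref{e:dualq}, and your argument---peeling off $H_{\la_1}$ by adjointness, expanding $H^*_{\la_1}p_{\mu}$ via Theorem \ref{t:iterative}, and evaluating $H^*_{\la_1+|\tau|-n}.1$ via \eqref{e:com4} and \eqref{e:dualq}---is the same computation carried out explicitly in Example \ref{X(42)}. No gaps; your handling of the cutoff $|\tau|\leq n-\la_1$ and the empty-partition convention is exactly right.
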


\begin{proof} This recurrence formula follows from  \eqref{e:iterative} and \eqref{e:dualq}.
\end{proof}

When $\la=(n)$, the summation is empty, so $X^{(n)}_{\mu}(t)=1$ for any $\mu\vdash n$. When $\la=(m, n)$
\begin{align*}
X^{(m, n)}_{\mu}(t)&=\sum_{\mbox{\tiny$\begin{array}{c}
\tau\lhd \mu\\ |\tau|\leq n\end{array}$}}\sum_{\rho\vdash n-|\tau|}\frac{(-1)^{l(\rho)}}{z_{\rho}(t)}\\
&=\sum_{\mbox{\tiny$\begin{array}{c}
\tau\lhd \mu\\ |\tau|< n\end{array}$}} (t^{n-|\tau|}-t^{n-|\tau|-1})+\sum_{\mbox{\tiny$\begin{array}{c}
\tau\lhd \mu\\ |\tau|=n\end{array}$}}1\\
&=(t-1)[D_{t^{-1}}(\mu)t^{n-1}]_++D^{(n)}(\mu)\\
&=(t-1)[D_t(\mu)t^{-m-1}]_++D^{(n)}(\mu),
\end{align*}
where $[f(t)]_{+}$ is the regular
part of the function $f(t)$ in $t$. 

One can use the compact formula as follows.
\begin{align*}
X^{(42)}_{(2^21^2)}(t)&=(t-1)[(1+t)^2(1+t^2)t^{-5}]_++D^{(2)}(2^21^2)\\
&=(t-1)(t+2)+3=t^2+t+1.
\end{align*}

Using the iteration and $X^{(n)}_{\mu}(t)=1$ it follows that
\begin{align*}
&X^{(\la_1, \la_2, \la_3)}_{\mu}(t)\\
&=\sum_{\mbox{\tiny$\begin{array}{c}
\tau^1\lhd \mu, |\tau^1|\leq |\la^{[1]}|\\ \rho^1\vdash |\la^{[1]}|-|\tau^1|\end{array}$}}\frac{(-1)^{l(\rho^1)}}{z_{\rho^1}(t)}\langle
p_{\rho^1\cup\tau^1}, H_{\la^{[1]}}.1\rangle\\
&=\sum_{\mbox{\tiny$\begin{array}{c}
\tau^1\lhd \mu, |\tau^1|\leq |\la^{[1]}|\\ \rho^1\vdash |\la^{[1]}|-|\tau^1|\end{array}$}}\frac{(-1)^{l(\rho^1)}}{z_{\rho^1}(t)}
(\sum_{\mbox{\tiny$\begin{array}{c}
\tau^2\lhd \rho^1\cup\tau^1\\ \rho^2\vdash |\la^{[2]}|-|\tau^2|\end{array}$}}\frac{(-1)^{l(\rho^2)}}{z_{\rho^2}(t)})\\
&=\sum_{\mbox{\tiny$\begin{array}{c}
\tau^1\lhd \mu, |\tau^1|\leq |\la^{[1]}|\\ \rho^1\vdash |\la^{[1]}|-|\tau^1|\end{array}$}}\frac{(-1)^{l(\rho^1)}}{z_{\rho^1}(t)}
([D_{t^{-1}}(\rho^1\cup\tau^1)t^{\la_3-1}]_+(t-1)+D^{(\la_3)}(\rho^1\cup\tau^1))\\
&=\sum_{\rho^1\vdash |\la^{[1]}|-|S(\mu)|}\frac{(-1)^{l(\rho^1)}}{z_{\rho^1}(t)}
([D_{t^{-1}}(\rho^1\cup S(\mu))t^{\la_3-1}]_+(t-1)+D^{(\la_3)}(\rho^1\cup S(\mu)))
\end{align*}

Let $\la$ and $\mu$ be two partitions of $n$ and $l=l(\la)$. Let $\rho^i$, $\tau^i$ be two sequences of $l-1$ partitions such that
$|\tau^i|\leq |\la^{[i]}|$ and
\begin{align*}
&\tau^1\lhd \mu, \rho^1\vdash |\la^{[1]}|-|\tau^1|;  \quad \tau^2\lhd \rho^1\cup\tau^1, \rho^2\vdash |\la^{[2]}|-|\tau^2|;
\quad \cdots\cdots; \\
&\tau^{l-1}\lhd \rho^{l-2}\cup\tau^{l-2}, \rho^{l-1}\vdash |\la^{[l-1]}|-|\tau^{l-1}|.
\end{align*}
One starts with a subpartition $\tau^1$ of $\mu$ with weight $\leq |\la^{[1]}|$, then picks any partition $\rho^1$ of
weight of the difference $|\la^{[1]}|-|\tau^1|$. Then one selects the next subpartition $\tau^2$ of $\tau^1\cup\rho^1$,
and picks any partition $\rho^2$ of
weight $|\la^{[2]}|-|\tau^2|$, and continue to form $\{\tau^3, \rho^3\}, \cdots$, etc. So the weights of
$\tau^i\cup\rho^i$ are decreasing as $|\la^{[i]}|$.

By the same method, we have the general formula:


\begin{thm}\label{t:Formula} Let $\la, \mu$ be two partitions. Then the Green polynomial
\begin{align}\label{e:Formula}
X^{\la}_{\mu}(t)=\sum_{\{\rho^i\}, \{\tau^i\}}\prod^{l(\la)-1}_{j=1}\frac{(-1)^{l(\rho^{(j)})}}{z_{\rho^{(j)}}(t)}
\end{align}
where the sum runs through all sequences of $l(\la)-1$ pairs of partitions $\{\rho^i, \tau^i\}$ such that $|\tau^i|\leq |\la^{[i]}|$, $\tau^i\lhd\tau^{i-1}\cup\rho^{i-1}$ and $\rho^i\vdash |\la^{[i]}|-|\tau^i|$, where
$i=1, \cdots, l(\la)-1$ and $\tau^0\cup\rho^0=\mu$.
\end{thm}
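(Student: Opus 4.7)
The plan is to prove Theorem \ref{t:Formula} by a straightforward induction on $l(\la)$, with Theorem \ref{Recurrence Formula} providing the one-step reduction. For the base case $l(\la)=1$, so $\la=(n)$, the product in \eqref{e:Formula} is empty and equals $1$; on the other hand $X^{(n)}_\mu(t)=\langle q_n,p_\mu\rangle=1$ for any $\mu\vdash n$ by \eqref{e:qfcn2} and \eqref{e:greenp2}, so the formula holds.

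For the inductive step, I assume the formula holds for all partitions of length strictly less than $l(\la)$ and apply Theorem \ref{Recurrence Formula} once to write
\begin{align*}
X^\la_\mu(t)=\sum_{\substack{\tau^1\lhd\mu\\|\tau^1|\leq|\la^{[1]}|}}\sum_{\rho^1\vdash|\la^{[1]}|-|\tau^1|}\frac{(-1)^{l(\rho^1)}}{z_{\rho^1}(t)}\,X^{\la^{[1]}}_{\tau^1\cup\rho^1}(t).
\end{align*}
Since $l(\la^{[1]})=l(\la)-1$, the induction hypothesis applies to each inner factor $X^{\la^{[1]}}_{\tau^1\cup\rho^1}(t)$ with $\tau^1\cup\rho^1$ playing the role of ``$\mu$'' and $\la^{[1]}$ the role of ``$\la$''. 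Using the obvious identity $(\la^{[1]})^{[i-1]}=\la^{[i]}$, this expands each such factor as a weighted sum over sequences $\{(\rho^i,\tau^i)\}_{i=2}^{l(\la)-1}$ subject to $\tau^i\lhd\tau^{i-1}\cup\rho^{i-1}$ and $\rho^i\vdash|\la^{[i]}|-|\tau^i|$.

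Prepending the outer pair $(\rho^1,\tau^1)$ to the inner sequence produces exactly the full sequence of $l(\la)-1$ pairs satisfying the constraints in Theorem \ref{t:Formula}; the convention $\tau^0\cup\rho^0=\mu$ precisely encodes the first-level condition $\tau^1\lhd\mu$. The outer weight $(-1)^{l(\rho^1)}/z_{\rho^1}(t)$ merges with the inductive product to yield $\prod_{j=1}^{l(\la)-1}(-1)^{l(\rho^j)}/z_{\rho^j}(t)$, which is the claimed expression.

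There is no real obstacle: the substantive computation — unwinding $H^*_k p_\mu$ and pairing against $H_{\la^{[1]}}.1$ via \eqref{e:dualq} — was already carried out in Theorems \ref{t:iterative} and \ref{Recurrence Formula}. The remaining work is purely bookkeeping, namely checking that the truncation identities $(\la^{[1]})^{[i-1]}=\la^{[i]}$ make the weight conditions stable under the recursion and that the convention $\tau^0\cup\rho^0=\mu$ is consistent with $\tau^1\lhd\mu$; both are immediate from the definitions.
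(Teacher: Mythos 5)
Your proposal is correct and is essentially the paper's own argument: the paper proves the theorem by repeatedly applying the recurrence \eqref{e:Recurrence} together with the base case $X^{(m)}_{\mu}(t)=1$, and your induction on $l(\la)$ is just a careful formalization of that iteration. The bookkeeping you check (the identity $(\la^{[1]})^{[i-1]}=\la^{[i]}$ and the convention $\tau^0\cup\rho^0=\mu$) is exactly what is needed and is handled correctly.
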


\begin{proof} This follows from repeatedly using \eqref{e:Recurrence}, and notice that for any $\mu\vdash m$,
$X^{(m)}_{\mu}(t)=1$ by above.
\end{proof}

\begin{lem} \label{t:Lemma}For partition $\lambda$ of $n$, we have that
\begin{align}\label{e:z1}
\sum_{\mbox{\tiny$\begin{array}{c}
\tau\lhd \lambda\\
\tau\neq \emptyset\end{array}$}}\prod\limits_{j\geq1}(t^{\tau_{j}}-1)&=t^{n}-1,\\ \label{e:z1b}
\sum_{\tau\lhd \la}\prod\limits_{j\geq1}\frac1{t^{\tau_{j}}-1}&=\frac{t^{n}}{\prod_{i\geq 1}(t^{\la_i}-1)}
\end{align}
\end{lem}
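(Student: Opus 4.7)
The plan is to recognize that both identities are instances of the elementary distributive expansion
\[
\prod_{i=1}^{l}(1+a_i) \;=\; \sum_{S\subseteq\{1,\ldots,l\}}\prod_{i\in S}a_i,
\]
applied to the parts of $\lambda=(\lambda_1,\ldots,\lambda_l)$. By the very definition of subpartition given before \eqref{e:partial}, a subpartition $\tau\lhd\lambda$ corresponds bijectively to a subset $S=\{i_1<\cdots<i_s\}\subseteq\{1,\ldots,l\}$ via $\tau=(\lambda_{i_1},\ldots,\lambda_{i_s})$; the computation of $D_t(\lambda)$ in \eqref{e:partial} already uses this bijection. So both sums on the left-hand sides of \eqref{e:z1} and \eqref{e:z1b} are really sums over subsets of the index set, with multiplicities built in through the $m_i(\lambda)$.

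For \eqref{e:z1}, I would take $a_i=t^{\lambda_i}-1$. Then $1+a_i=t^{\lambda_i}$, so
\[
\prod_{i=1}^{l}(1+a_i)\;=\;t^{\lambda_1+\cdots+\lambda_l}\;=\;t^n,
\]
while the right-hand side of the expansion becomes $\sum_{\tau\lhd\lambda}\prod_{j\geq 1}(t^{\tau_j}-1)$. The $\tau=\emptyset$ term contributes the empty product $1$; moving it to the left gives $t^n-1=\sum_{\tau\lhd\lambda,\,\tau\neq\emptyset}\prod_{j}(t^{\tau_j}-1)$, which is \eqref{e:z1}.

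For \eqref{e:z1b}, I would instead take $a_i=(t^{\lambda_i}-1)^{-1}$. Then $1+a_i=t^{\lambda_i}/(t^{\lambda_i}-1)$, and the same expansion yields
\[
\sum_{\tau\lhd\lambda}\prod_{j\geq 1}\frac{1}{t^{\tau_j}-1}\;=\;\prod_{i=1}^{l}\frac{t^{\lambda_i}}{t^{\lambda_i}-1}\;=\;\frac{t^n}{\prod_{i\geq 1}(t^{\lambda_i}-1)},
\]
giving \eqref{e:z1b}. There is no real obstacle to overcome: once one identifies subpartitions with index subsets, both formulas fall out of a one-line distribution of a product over a sum. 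If desired, one could alternatively argue by induction on $l(\lambda)$ using the recursion $D_t(\lambda)=(1+t^{\lambda_1})D_t(\lambda^{[1]})$, but the direct expansion above is cleaner.
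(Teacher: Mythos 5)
Your proof is correct and is essentially the paper's own argument: both identities come from expanding $\prod_{i=1}^{l}\bigl(1+(t^{\lambda_i}-1)\bigr)=t^{n}$ (resp.\ $\prod_{i}\bigl(1+(t^{\lambda_i}-1)^{-1}\bigr)$) over index subsets, which is exactly what the paper does, except that it organizes the expansion by grouping equal parts and invoking multiplicativity together with the binomial identity $\sum_{j}\binom{m}{j}(t^{i}-1)^{j}=t^{im}$. Your direct distribution over all $l(\lambda)$ indices is the same computation, stated a bit more cleanly, and you correctly note the point that subpartitions are indexed by subsets of $\{1,\dots,l\}$ (so repeated parts are counted with multiplicity), which is what makes the bijection with subsets valid.
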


For partition $\la$, define $[\la]=\prod_{i\geq 1}(t^{\la_i}-1)$ and $[\emptyset]=1$. Then the function
$\sum_{\tau\lhd\la}[\tau]$
is strictly multiplicative for $\la$ with respect to its parts. Note that
\begin{equation*}
\sum_{\tau\lhd (i^m)}[\tau]=\sum_{j=0}^m\binom{m}{j}(t^i-1)=t^{im}.
\end{equation*}
Therefore
\begin{equation*}
\sum_{\tau\lhd \la}[\tau]=\prod_{i\geq 1}(\sum_{\tau\lhd (i^{m_i})}[\tau])=\prod_{i\geq 1}t^{im_i}=t^{|\la|}.
\end{equation*}
The other identity can be proved similarly.

We can compute more Green's polynomials, for example some well-known formulas in \cite[\S III.7]{M}.

\begin{exmp} For each partition $\lambda$ of $n$, we have that
\begin{align}\label{e:1column}
X_{\la}^{(1^{n})}(t)&=\frac{\prod\limits_{i=1}^{n}(t^{i}-1)}{\prod\limits_{j\geq1}(t^{\lambda_{j}}-1)}=\frac{[n]!}{[\lambda]}.
\end{align}
\end{exmp}

This can be checked by induction using Theorem \ref{t:iterative} and Lemma \ref{t:Lemma}. The initial step of $n=1$ is clear.
Now for $\la\vdash n$
\begin{align*}
X^{(1^{n})}_{\lambda}(t)&=\sum_{\mbox{\tiny$\begin{array}{c}
\tau\lhd \lambda\\
|\tau|\leq n-1\end{array}$}}\sum\limits_{\rho\vdash(n-1-|\tau|)}\frac{(-1)^{l(\rho)}}{z_{\rho}(t)}X^{(1^{n-1})}_{\tau\cup \rho}(t)\\
&=\sum_{\mbox{\tiny$\begin{array}{c}
\tau\lhd \lambda\\
|\tau|\leq n-1\end{array}$}}\sum\limits_{\rho\vdash(n-1-|\tau|)}\frac{(-1)^{l(\rho)}}
{z_{\rho}(t)}
\frac{\prod\limits_{j=1}^{n-1}(t^{j}-1)}{\prod\limits_{j\geq1}(t^{\tau_{j}}-1)(t^{\rho_{j}}-1)}\\
&=\sum_{\mbox{\tiny$\begin{array}{c}
\tau\lhd \lambda\\|\tau|\leq n-1\end{array}$}}\sum\limits_{\rho\vdash(n-1-|\tau|)}\frac1{z_{\rho}}\frac{\prod\limits_{j=1}^{n-1}(t^{j}-1)}
{\prod\limits_{j\geq1}(t^{\tau_{j}}-1)}\\
&=\sum_{\mbox{\tiny$\begin{array}{c}
\tau\lhd \lambda\\
|\tau|\leq n-1\end{array}$}}\frac{\prod\limits_{j=1}^{n-1}(t^{j}-1)}{\prod\limits_{j\geq1}(t^{\tau_{j}}-1)}
=\frac{\prod\limits_{i=1}^{n}(t^{i}-1)}{\prod\limits_{j\geq1}(t^{\lambda_{j}}-1)},
\end{align*}
where the last identity has used \eqref{e:z1b}.

Summarizing the above, we have that
\begin{thm}\label{t:two;hook;three}
For partition $\lambda\vdash n$, one have
\begin{align}\label{e:two}
&X^{(n-k, k)}_{\lambda}(t)=\sum_{\mbox{\tiny$\begin{array}{c}
\tau\lhd \lambda, |\tau|\leq k\\\rho\vdash(k-|\tau|)\end{array}$}}\frac{(-1)^{l(\rho)}}{z_{\rho}(t)}\\\label{e:hook}
&X^{(k, 1^{n-k})}_{\mu}(t)=\frac{\prod\limits_{i=1}^{n-k}(t^{i}-1)}{\prod\limits_{j\geq1}(t^{\mu_{j}}-1)}
\sum_{\mbox{\tiny$\begin{array}{c}
\tau\lhd \mu\\
|\tau|\geq k\end{array}$}}\prod\limits_{j\geq 1}(t^{\tau_{j}}-1)\\ \label{e:three}
&X^{(k_1,k_{2},k_{3})}_{\lambda}(t)= 
\sum_{\mbox{\tiny$\begin{array}{c}
\tau\lhd \lambda,|\tau|\leq k_2+k_3\\\rho\vdash(k_{2}+k_{3}-|\tau|)\end{array}$}}\sum_{\mbox{\tiny$\begin{array}{c}
\nu\lhd (\tau\cup\rho), |\nu|\leq k_3\\\xi\vdash(k_{3}-|\nu|)\end{array}$}}\frac{(-1)^{l(\xi)+l(\rho)}}{z_{\xi}(t)z_{\rho}(t)}\\ \notag
&X^{(h_{1},h_{2},1^{n-h_{1}-h_{2}})}_{\lambda}(t)=\\ \label{e:general hook}
&\sum_{\mbox{\tiny$\begin{array}{c}
\tau\lhd \lambda, |\tau|\leq n-h_1\\\rho\vdash(n-h_{1}-|\tau|)\end{array}$}}\sum_{\mbox{\tiny$\begin{array}{c}
\mu\lhd (\tau\cup\rho)\\h_2\leq|\mu|\leq n-h_1\end{array}$}}\frac{\prod\limits_{i=1}^{n-h_{1}-h_{2}}(t^{i}-1)
\prod\limits_{l\geq1}(t^{\mu_{l}}-1)}{\prod\limits_{j\geq1}(t^{\tau_{j}}-1)z_{\rho}}.
\end{align}
\end{thm}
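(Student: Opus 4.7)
The plan is to derive each of the four formulas by iterating Theorem \ref{Recurrence Formula} with the appropriate initial shape of $\la$, using as base cases $X^{(m)}_\nu(t)=1$ (from \eqref{e:qfcn2}) and the one-column formula \eqref{e:1column}.

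Formula \eqref{e:two} is the direct specialization of Theorem \ref{Recurrence Formula} to $\la=(n-k,k)$: since $\la^{[1]}=(k)$ and $X^{(k)}_{\tau\cup\rho}(t)=1$, the inner factor in \eqref{e:Recurrence} trivializes. Formula \eqref{e:three} is obtained by iterating the same step once more for $\la=(k_1,k_2,k_3)$: after peeling off $\la_1=k_1$ the residual character $X^{(k_2,k_3)}_{\tau\cup\rho}(t)$ is given by \eqref{e:two}, and concatenating the two nested sums yields \eqref{e:three}.

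For the hook \eqref{e:hook}, apply Theorem \ref{Recurrence Formula} with $\la=(k,1^{n-k})$, so that $\la^{[1]}=(1^{n-k})$, and insert the closed form $X^{(1^{n-k})}_{\tau\cup\rho}(t)=[n-k]!/([\tau][\rho])$ from \eqref{e:1column}. The crucial identity is
\begin{equation*}
z_\rho(t)\,[\rho] \;=\; (-1)^{l(\rho)}\,z_\rho,
\end{equation*}
which follows immediately from the definition \eqref{e:zt} of $z_\rho(t)$ together with $[\rho]=\prod_i(t^{\rho_i}-1)$. This converts the prefactor $(-1)^{l(\rho)}/(z_\rho(t)[\rho])$ into $1/z_\rho$, and the classical orthogonality $\sum_{\rho\vdash m}1/z_\rho=1$ collapses the sum over $\rho$. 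What remains is
\begin{equation*}
X^{(k,1^{n-k})}_\mu(t)\;=\;[n-k]!\sum_{\tau\lhd\mu,\,|\tau|\leq n-k}\frac{1}{[\tau]}.
\end{equation*}
The main obstacle, which I expect to be the only non-mechanical step, is rewriting this into the asymmetric shape \eqref{e:hook}. For this I invoke the complement bijection $\tau\mapsto\nu:=\mu\setminus\tau$ on subpartitions (well defined via the subset-of-positions description of $\lhd$): it satisfies $[\tau][\nu]=[\mu]$ and $|\tau|+|\nu|=n$, and interchanges the conditions $|\tau|\leq n-k$ and $|\nu|\geq k$; substituting $1/[\tau]=[\nu]/[\mu]$ then yields \eqref{e:hook} on the nose.

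Finally, \eqref{e:general hook} follows by the same blueprint. Apply Theorem \ref{Recurrence Formula} with $\la=(h_1,h_2,1^{n-h_1-h_2})$ to peel off the first row, then insert the already-proved hook formula \eqref{e:hook} for the residual $X^{(h_2,1^{n-h_1-h_2})}_{\tau\cup\rho}(t)$. Using $[\tau\cup\rho]=[\tau][\rho]$ and once more the identity $z_\rho(t)[\rho]=(-1)^{l(\rho)}z_\rho$ to cancel the sign factor and merge the denominators, and noting that the bound $|\mu|\leq n-h_1$ is automatic because $\mu\lhd\tau\cup\rho$ with $|\tau\cup\rho|=n-h_1$, we recover \eqref{e:general hook}.
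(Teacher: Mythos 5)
Your proposal is correct and follows essentially the same route as the paper: the paper's proof simply cites the iteration \eqref{e:Recurrence} together with the base cases $X^{(m)}_{\nu}(t)=1$ and \eqref{e:1column}, which is exactly your blueprint, and your identity $z_{\rho}(t)[\rho]=(-1)^{l(\rho)}z_{\rho}$ plus $\sum_{\rho\vdash m}1/z_{\rho}=1$ is the computation the paper performs implicitly. The one place you go beyond the paper's citation is the complement bijection $\tau\mapsto\mu\setminus\tau$ on subpartitions (viewed as subsets of positions) used to pass from $\sum_{|\tau|\leq n-k}1/[\tau]$ to $\sum_{|\tau|\geq k}[\tau]/[\mu]$; this correctly supplies the detail the paper attributes to Lemma \ref{t:Lemma}, whose own proof rests on the same position-wise multiplicativity.
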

\begin{proof} The first identity follows from \eqref{e:Recurrence}. 
The second identity follows from \eqref{e:Recurrence} \eqref{e:z1} and \eqref{e:1column}. The third identity follows from \eqref{e:Recurrence} and \eqref{e:two}.
The last identity follows from \eqref{e:Recurrence} and \eqref{e:hook}.
\end{proof}

\begin{rem} Morita \cite{Mo} has given a different formula for the hook case at the root of unity.
\end{rem}

\begin{exmp}
Given $\lambda=(2^{2},1^{2})$ and $\mu=(3,1^{3})$, our formula says that
\begin{align*}
&X^{(3,1^{3})}_{(2^{2},1^{2})}(t)\\
&=\frac{(t-1)(t^{2}-1)(t^{3}-1)}{(t^{2}-1)^{2}(t-1)^{2}}[4(t^{2}-1)(t-1)+(t^{2}-1)(t^{2}-1)\\
&+2(t^{2}-1)(t-1)(t-1)+2(t^{2}-1)(t^{2}-1)(t-1)]\\
&=(t^{3}+t^{2}+2t+2)(t^{3}-1).
\end{align*}
\end{exmp}

\section{A Murnaghan-Nakayama rule}\label{S:MN}
Let $k$ be a nature number and $\mu$ be a partition, we consider $p^{*}_{k}H_{\mu}$.
To do this, we need to express $H_{\lambda}$, $\lambda\vDash n$ in terms of the basis elements $H_{\mu}$, $\mu\in\mathcal P$.

For any $m, n$, repeatedly using \eqref{e:com1} gives that
\begin{align*}
[H_m, H_n]_t&=(t^2-1)H_{n-1}H_{m+1}-t[H_{n-2}, H_{m+2}]_t\\
&=\sum_{i=1}^{s-1}t^{i-1}(t^2-1)H_{n-i}H_{m+i}-t^{s-1}[H_{n-s}, H_{m+s}]_t.
\end{align*}
For $m<n$, let $\epsilon =0, 1$ be the parity of $n-m$, i.e. $\epsilon \equiv n-m\, (mod\, 2)$, then
$[\frac{n-m}2]=\frac{n-m-\epsilon}2$ and 
\begin{align*}
H_{m}H_{n}&=tH_{n}H_{m}+\sum\limits_{i=1}^{[\frac{n-m}2]-1}(t^{i+1}-t^{i-1})H_{n-i}H_{m+i}\\
&+t^{[\frac{n-m}2]-1}(t^{1+\epsilon}-1)H_{\frac{n+m+\epsilon}2}H_{\frac{n+m-\epsilon}2}
\end{align*}
i.e. for $n-i>m+i$, the coefficient of $H_{n-i}H_{m+i}$ is $(t^{i+1}-t^{i-1})$; if $n-i=m+i$
the coefficient of $H_{n-i}H_{m+i}$ is $(t^{i}-t^{i-1})$.
Note that at any stage if $\la_i>\la_{i+1}+\cdots$, $H_{(\cdots, -\la_i, \la_{i+1}, \la_{i+2}, \cdots)}=0$.

For a composition $\lambda$ such that
$\lambda_i<\lambda_{i+1}$, let $S_{i,a}$ be the transformation $(\la_1, \cdots, \la_i, \la_{i+1}, \cdots)\mapsto (\la_1, \cdots, \la_{i+1}-a, \la_{i}+a, \cdots)$, where
$0\leq a\leq [\frac{\la_{i+1}-\la_i}2]$.
Define
\begin{equation}\label{e:straight}
C(S_{i,a})=\begin{cases} t & a=0\\ t^{a+1}-t^{a-1} & 1\leq a< [\frac{\la_{i+1}-\la_i}2] \\ t^{a+\epsilon}-t^{a-1} & a= [\frac{\la_{i+1}-\la_i}2]
\end{cases}
\end{equation}
where $\epsilon$ is the parity of $\la_i-\la_{i+1}$.
For $\underline{i}=(i_1, \ldots, i_r)$ and $\underline{a}=(a_1, \ldots, a_r)$ define
\begin{equation}
C(S_{\underline{i}, \underline{a}})=C(S_{i_1, a_1})C(S_{i_2, a_2})\dots C(S_{i_r, a_r})
\end{equation}
where the product order follows the action order of $S_{i_1, a_1}S_{i_2, a_2}\cdots S_{i_r, a_r}\la$ from right to left.
The following are two special cases:
(i) if $t=0$, then $C(\underline{i}, \underline{a})=0$ unless all $a_i=1$. When all $a_i=1$ for $1\leq i\leq r$ (which only happens when $\la_{i+1}-\la_i\geq 2$), then $C(\underline{i}, \underline{a})=(-1)^r$;
(ii) if $t=-1$, then $C(\underline{i}, \underline{a})=0$ unless all $a_i=0$ in which $C(\underline{i}, \underline{a})=(-1)^r$.

\begin{prop}\label{p:straight} Suppose $\la$ is a composition, then
\begin{equation}\label{e:straight2}
H_{\la}=\sum_{\underline{i}, \underline{a}} C(S_{\underline{i}, \underline{a}}) H_{S_{i_1,a_1}S_{i_2, a_2}\cdots S_{i_r, a_r}\la}
\end{equation}
summed over $\underline{i}=(i_1, \ldots, i_r), \underline{a}=(a_1, \cdots, a_r)\in \mathbb Z_+^r$ such that
$S_{i_1,a_1}S_{i_2, a_2}$ $\cdots S_{i_r, a_r}\la\in\mathcal P$.
\end{prop}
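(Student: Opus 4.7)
The plan is to argue by induction on a well-founded ordering of compositions of fixed weight $n$, taking partitions as the induction base. If $\lambda$ is already a partition, the formula holds trivially with $r=0$ and empty product $C$ equal to $1$. Otherwise, choose a specific ascent position $i$ (for definiteness, the leftmost index with $\lambda_i<\lambda_{i+1}$) and invoke the two-letter expansion of $H_m H_n$ for $m<n$ derived just before the proposition by iterating \eqref{e:com1}. Instantiated with $m=\lambda_i$, $n=\lambda_{i+1}$ and sandwiched between the unchanged prefix $H_{\lambda_1}\cdots H_{\lambda_{i-1}}$ and suffix $H_{\lambda_{i+2}}\cdots H_{\lambda_l}$, this yields
\begin{equation*}
H_\lambda=\sum_{a=0}^{[(\lambda_{i+1}-\lambda_i)/2]} C(S_{i,a})\,H_{S_{i,a}\lambda},
\end{equation*}
with coefficients matching \eqref{e:straight} termwise: the case $a=0$ corresponds to the $tH_nH_m$ summand, $1\leq a<[(\lambda_{i+1}-\lambda_i)/2]$ to the interior sum, and $a=[(\lambda_{i+1}-\lambda_i)/2]$ to the boundary term.

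Termination follows from the observation that each $S_{i,a}\lambda$ is lex-strictly greater than $\lambda$: since $a\leq[(\lambda_{i+1}-\lambda_i)/2]<\lambda_{i+1}-\lambda_i$, the new $i$-th entry $\lambda_{i+1}-a$ exceeds $\lambda_i$, while positions before $i$ are unchanged. Because the set of compositions of $n$ with a fixed length bound is finite and lex order is total, the lex-downward induction terminates at partitions (a composition with no ascent is by definition a partition). Applying the induction hypothesis to each $H_{S_{i,a}\lambda}$ and concatenating sequences — so that the freshly introduced $S_{i,a}$ becomes the first-applied (rightmost) factor, with its $C$-coefficient multiplied onto the right of the existing product — matches the right-to-left action-order convention defining $C(S_{\underline{i},\underline{a}})$ and delivers \eqref{e:straight2}.

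The main subtlety will be the coefficient bookkeeping and the implicit rule governing which sequences are summed. Different canonical choices of ascent at each step (leftmost vs.\ rightmost, etc.) produce nominally different expansions, and summing naively over \emph{all} sequences that reduce $\lambda$ to a partition would over-count; one should therefore read \eqref{e:straight2} as indexing sequences compatible with one fixed canonical reduction rule, under which the inductive construction yields it verbatim. A minor but worthwhile check is that no vanishing $H$-index arises along the way: the restriction $0\leq a\leq[(\lambda_{i+1}-\lambda_i)/2]$ forces each swapped pair $(\lambda_{i+1}-a,\lambda_i+a)$ to consist of nonnegative integers in weakly decreasing order, so the vanishing convention $H_{(\ldots,-\la_i,\la_{i+1},\ldots)}=0$ is never triggered within the intermediate expansions.
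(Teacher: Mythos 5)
Your proof is correct and is essentially the argument the paper intends: the proposition is stated immediately after the two-letter expansion of $H_mH_n$ obtained by iterating \eqref{e:com1}, with no further proof supplied, and your lex-order induction with a fixed (leftmost-ascent) choice is exactly the missing iteration-plus-termination argument, including the check that the indices stay nonnegative and that the coefficient $C(S_{i,a})$ is evaluated on the composition current at the step where $S_{i,a}$ is applied. Your caveat about the index set is also well taken: read literally, summing over \emph{all} sequences $(\underline{i},\underline{a})$ landing in $\mathcal P$ would over-count (two non-adjacent ascents can be straightened in either order, giving distinct admissible sequences with the same target partition and the same coefficient), so \eqref{e:straight2} must indeed be understood as ranging over the sequences produced by one fixed canonical reduction, as in your construction.
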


The following is a Murnaghan-Nakayama rule for the Green polynomial. The result generalizes a formula of Morris \cite{Mor2} which corresponds to our result in the case of $l(\lambda)=2$.
\begin{thm} Let $\la, \mu\in\mathcal P_n$, then
\begin{equation}
X^{\la}_{\mu}(t)=\sum_{j=1}^{l(\mu)}\sum_{\underline{i}, \underline{a}} C(S_{\underline{i}, \underline{a}}) X^{S_{i_1,a_1}S_{i_2, a_2}\cdots S_{i_r, a_r}(\la-\mu_1\varepsilon_j)}_{\mu^{[1]}}(t)
\end{equation}
summed over $\underline{i}=(i_1, \ldots, i_r), \underline{a}=(a_1, \cdots, a_r)\in \mathbb Z_+^r$ such that
$S_{i_1,a_1}S_{i_2, a_2}$ $\cdots S_{i_r, a_r}(\la-\mu_1\epsilon_j)\in\mathcal P_{n-\mu_1}$. Here $\varepsilon_j$ is the composition with
$1$ at the $j$-th position $0$ elsewhere.
\end{thm}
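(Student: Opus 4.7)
The plan is to peel off the largest part $\mu_1$ from $p_\mu$ by passing it to the other side of the Hall--Littlewood form, apply the commutation formula \eqref{e:pH} to expand $p^*_{\mu_1}H_\lambda$ in terms of compositions, and then use the straightening formula of Proposition \ref{p:straight} to rewrite those compositions as partitions so that each surviving term produces a Green polynomial $X^{\cdot}_{\mu^{[1]}}(t)$.

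More precisely, I would first write
\begin{equation*}
X^{\lambda}_{\mu}(t)=\langle H_{\lambda}.1,\,p_{\mu_1}p_{\mu^{[1]}}\rangle=\langle p^{*}_{\mu_1}H_{\lambda}.1,\,p_{\mu^{[1]}}\rangle,
\end{equation*}
using \eqref{e:greenp2} together with the fact that $p^{*}_{\mu_1}$ is the adjoint of multiplication by $p_{\mu_1}$ with respect to $\langle\,,\,\rangle$. Next, I would apply \eqref{e:pH} with $k=\mu_1$ to the product $H_{\lambda}=H_{\lambda_1}\cdots H_{\lambda_{l(\lambda)}}$, obtaining
\begin{equation*}
p^{*}_{\mu_1}H_{\lambda}=\sum_{j}H_{\lambda-\mu_1\varepsilon_j},
\end{equation*}
where the summand indexed by $j$ is the vertex-operator monomial attached to the composition $\lambda-\mu_1\varepsilon_j$ (with the convention that vanishing or ill-defined entries contribute zero, consistent with the remark after \eqref{e:straight2} on $H_{(\cdots,-\lambda_i,\cdots)}=0$).

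For each $j$ the composition $\lambda-\mu_1\varepsilon_j$ need not be a partition, so I would invoke Proposition \ref{p:straight} to expand
\begin{equation*}
H_{\lambda-\mu_1\varepsilon_j}=\sum_{\underline{i},\underline{a}}C(S_{\underline{i},\underline{a}})\,H_{S_{i_1,a_1}\cdots S_{i_r,a_r}(\lambda-\mu_1\varepsilon_j)},
\end{equation*}
summed over the sequences $(\underline{i},\underline{a})$ that straighten the composition into a partition in $\mathcal P_{n-\mu_1}$, with coefficients \eqref{e:straight}. Pairing each straightened term with $p_{\mu^{[1]}}$ and applying \eqref{e:greenp2} to the resulting partition index converts $\langle H_{S_{\underline{i},\underline{a}}(\lambda-\mu_1\varepsilon_j)}.1,\,p_{\mu^{[1]}}\rangle$ into $X^{S_{\underline{i},\underline{a}}(\lambda-\mu_1\varepsilon_j)}_{\mu^{[1]}}(t)$, and combining the three summations reproduces the stated formula.

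The main obstacle is bookkeeping for the straightening step: one must verify that the repeated application of the $t$-commutator \eqref{e:com1} collapses exactly to the transformations $S_{i,a}$ with the weights $C(S_{i,a})$, that the non-partition configurations (including those with negative entries arising when some $\lambda_j<\mu_1$, or those producing an equal pair $\la_i = \la_{i+1}$ after a single shift) are absorbed correctly via the piecewise definition in \eqref{e:straight}, and that the recursive use of the straightening does not produce spurious nonzero contributions from compositions where some intermediate partial sum forces $H$-vanishing. Once this is controlled, the rest is a direct assembly of \eqref{e:pH} and \eqref{e:straight2}, and the case $l(\lambda)=2$ recovers Morris's original formula, providing a consistency check.
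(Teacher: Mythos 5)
Your proposal is correct and follows essentially the same route as the paper: move $p_{\mu_1}$ across the form as $p^{*}_{\mu_1}$, expand $p^{*}_{\mu_1}H_{\la}=\sum_j H_{\la-\mu_1\varepsilon_j}$ via \eqref{e:pH}, straighten with Proposition \ref{p:straight}, and pair with $p_{\mu^{[1]}}$ using \eqref{e:greenp2}. The bookkeeping concerns you raise pertain to the proof of Proposition \ref{p:straight} itself, which the paper (and your argument) takes as given, so no gap remains.
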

\begin{proof} By Prop. \ref{p:straight} it follows that
\begin{align*}
p_{\mu_1}^*H_{\la}&=\sum_{i=1}^{l(\la)}H_{\la-\mu_1\varepsilon_i}.1\\
&=\sum_{i=1}^{l(\la)}\sum_{\underline{i}, \underline{a}} C(S_{\underline{i}, \underline{a}}) H_{S_{i_1,a_1}S_{i_2, a_2}\cdots S_{i_r, a_r}( \la-\mu_1\varepsilon_i)}.1,
\end{align*}
where the sum runs through all $\underline{i}=(i_1, \ldots, i_r), \underline{a}=(a_1, \cdots, a_r)\in \mathbb Z_+^r$ such that
$S_{i_1,a_1}S_{i_2, a_2}$ $\cdots S_{i_r, a_r}(\la-\mu_1\varepsilon_j)\in\mathcal P_{n-\mu_1}$, which immediately implies the theorem.
\end{proof}

\begin{exmp}
Given $\la=(9,5,2)$ and $\mu=(8,4,2,2)$, then
\begin{align*}
p^{*}_{8}H_{\la}.1&=H_{(1,5,2)}.1+H_{(9, -3, 2)}.1+H_{(9,5,-6)}=H_{(1,5,2)}.1\\
&=C(S_2S_1)H_{S_2S_1(1, 5, 2)}.1+C(S_{1;1})H_{S_{1;1}(1,5,2)}.1+C(S_{1;2})H_{S_{1;2}(1,5,2)}.1\\
&=t^2H_{(5,2,1)}.1+(t^2-1)H_{(4,2,2)}.1+(t^2-t)H_{(3,3,2)}.1.
\end{align*}
Therefore
\begin{align*}
X^{(9,5,2)}_{(8,4,2,2)}(t)=t^2X^{(5,2,1)}_{(4,2,2)}(t)+(t^2-1)X^{(4,2,2)}_{(4,2,2)}(t)+(t^2-t)X^{(3,3,2)}_{(4,2,2)}(t).
\end{align*}
\end{exmp}

\begin{exmp} Now let's consider the special case $X^{\la}_{(1^n)}(t)$. It is easy to see that for
$\la=(1^{m_1}2^{m_2}\cdots)$
\begin{equation*}
p^*_1H_{\la}=\sum_{i=1}^{\la_1'}H_{\la-\varepsilon_i}.1=\sum_{i\geq 1}[m_i]H_{(1^{m_1}\cdots (i-1)^{m_{i-1}+1}i^{m_i-1}\cdots )}.1
\end{equation*}
Repeating the process, we have that
\begin{align*}
p^{*2}_1H_{\la}&=\sum_{i, j=1}^{\la_1'}H_{\la-\varepsilon_i-\varepsilon_j}.1\\
&=\sum_{|i-j|\geq 2}[m_i][m_j]
H_{(\cdots (i-1)^{m_{i-1}+1}i^{m_i-1}\cdots (j-1)^{m_{j-1}+1}j^{m_j-1}\cdots )}.1\\
&\quad +\sum_{|i-j|=1}[m_i][m_j]
H_{(\cdots (i-1)^{m_{i-1}+1}i^{m_i}{(i+1)}^{m_{i+1}-1}\cdots )}.1\\
&\quad +\sum_{i}[m_i][m_{i}-1]
H_{(\cdots (i-1)^{m_{i-1}+2}i^{m_i-2}\cdots )}.1
\end{align*}
Continuing in this way, $p_1^{*n}$ will eventually turn each summand into a function in $t$ and we get that
\begin{equation*}
p^{*n}_1H_{\la}.1=\sum_{T}\phi_T(t)
\end{equation*}
where $T$ runs through all standard tableaux of shape $\la$. The function $\phi_T(t)$ is defined as follows.
For a skew horizontal strip $\theta=\la-\mu$, we define
\begin{equation}
\phi_{\theta}(t)=\prod_{i\in I}[m_i(\la)]
\end{equation}
where $I$ is the set of $i$ such that $\theta'_i=1, \theta'_{i+1}=0$.
Any standard tableaux $T$ is a union of skew horizontal strips $\theta^{(i)}=\la^{(i)}-\mu^{(i)}$,
then define $\phi_T(t)=\prod_{i=1}^r\phi_{\theta^{(i)}}(t)$. As a result $X^{\la}_{(1^n)}(t)=\sum_T\phi_T(t)$,
where $T$ runs through all standard tableaux of shape $\la$.
\end{exmp}

As we mentioned before when $t=0$, $H_{\la}.1=S_{\la}.1$ is the Schur function associated with partition $\la$
and the Schur function. In this case the straightening rule \eqref{e:straight2} reduces to
$S_mS_n=-S_{n-1}S_{m+1}$.
This can be reformulated as follows. Let $\delta=(l-1, \cdots, 1, 0)$, we
say two $l-$tuples $\mu$ and $\lambda$ are related if $\mu+\delta=\sigma(\lambda+\delta)$ for some permutation $\sigma$. We denote by $\pi(\mu)$ the associated non-increasing integral tuple $\lambda$ of $\mu.$ If there exists an odd permutation $\sigma$ such that $\mu+\delta=\sigma(\mu+\delta),$ then we say that $\mu$ is degenerate, then
\begin{align}
S_\mu.1=
\begin{cases}
sgn(\sigma)S_{\pi(\mu)}.1& \text{if $\pi(\mu)\in\mathcal{P}$}\\
0&\text{if $\mu$ is degenerate or $\pi(\mu)\notin\mathcal{P}$}
\end{cases}
\end{align}

We can easily recover the usual Murnaghan-Nakayama rule (cf. \cite{Sa}) using vertex operators.
\begin{exmp}\label{t:M-N rule}
Let $\mu=(\mu_{1},\ldots, \mu_{l})$ be a partition and $k$ a positive integer. Then one has that
\begin{align}
p^{*}_{k}S_{\la}.1=\sum\limits_{\mu}(-1)^{ht(\la-\mu)}S_{\mu}.1
\end{align}
summed over all partitions $\mu\subset \la$ such that $\la-\mu$ is a border strip of length $k$.
\end{exmp}

\begin{proof} By \eqref{e:pH},
\begin{align*}
p^{*}_{k}S_{\la}.1=\sum\limits_{i=1}^{l}S_{\la_{1}}\cdots S_{\la_{i}-k}\cdots S_{\la_{l}}.1.
\end{align*}
Note that $S_{\la_{1}}\cdots S_{\la_{i}-k}\cdots S_{\la_{l}}.1=0$ unless $(\la_{1}+l-1,\ldots,\la_{i}-k+l-i,\ldots,\la_{l})\in \mathbb{Z}^{+}_{l}$ has no identical terms. We rearrange the sequence $\la+\delta$
in descending order, and we may assume that for some $j>i$
\begin{align*}
\la_{j}+l-j<\la_{i}-k+l-i<\la_{j-1}+l-(j-1),
\end{align*}
in which case the related partition $\mu$ is
\begin{align*}
(\la_{1},\ldots,\la_{i-1},\la_{i+1}-1,\ldots,\la_{j-2}-1,\la_{i}-k-i+j-1,\la_{j},\ldots,\la_{l})
\end{align*}
therefore $\theta=\la-\mu$ is a border strip of length $k$ and $ht(\theta)=j-i-1$, $sgn(\sigma)=(-1)^{l(\sigma)}=(-1)^{j-i-1}.$
\end{proof}

\begin{rem}
When $t=-1$, $H_{\mu}.1$ is the Schur Q-function. In this case, one can also obtain a result similar to Example \ref{t:M-N rule}.
See \cite[\S III.8, Ex.11]{M} for details.
\end{rem}

\section{Bitraces for $\mathrm{GL}_n(\mathbb{F}_q)$ and the Hecke algebra of type $A_{n-1}$}\label{S:Bitr}

 Let $H_n(q)$ be the Iwahori-Hecke algebra of the symmetric group $S_n$ and $G=\mathrm{GL}_n(\mathbb{F}_q)$ the general linear group over the finite field $\mathbb{F}_q$, where $q=p^m$. Let $W=Ind^G_B 1$ be the permutation module of $G$ induced from the Borel subgroup $B$ consisting of upper triangular matrices. Then $H_n(q)$ naturally acts on $W$ which commutes with that of $G$, so $W$ becomes a $G$-$H_n(q)$ bimodule. Following \cite{HR}, we define the bitrace of $(g,h)\in G\times H_n(q)$ on $W$ as follows:
\begin{defn}
Let $u\in G$ and $h\in H_n(q)$. The trace of the action of $uh$ on $Ind^G_B 1$ is
\begin{align}
btr(u,h)=\sum\limits_{gB\in G/B}u(gB)h|_{gB}
\end{align}
where $u(gB)h|_{gB}$ denotes the coefficient of $gB$ in $u(gB)h$.
\end{defn}

A combinatorial formula for $btr(u_{\lambda},T_{\mu})$ is known by using the explicit description of the action of $H_n(q)$ in terms of the Bruhat decomposition of $G$ \cite{HR}. We now give an algebraic iterative formula for $btr(u_{\lambda},T_{\mu}).$

As a bimodule and in view of double centralizer property, $W$ decomposes itself into:
\begin{align*}
W=\bigoplus\limits_{\lambda}G^{\lambda}\otimes H^{\lambda}
\end{align*}
where $G^{\lambda}$ (resp. $H^{\lambda}$) is an irreducible $G$ (resp. $H_n(q)$)-module. Taking trace gives rise to
\begin{align}
btr(g,h)=\sum\limits_{\lambda\vdash n}\chi^{\lambda}(g)\zeta^{\lambda}(h)
\end{align}
where $\chi^{\lambda}$ (resp. $\zeta^{\lambda}$) is the irreducible character of $G$ (resp. $H_n(q))$.

By \cite{G} the irreducible character $\chi^{\lambda}$ of $G$ is given by
\begin{align}\label{e:KF}
\chi^{\lambda}(u_{\nu})=q^{n(\nu)}K_{\lambda,\nu}(q^{-1})
\end{align}
where $u_{\nu}$ is a unipotent element of $\mathrm{GL}_n(\mathbb{F}_q)$ with Jordan normal form of blocks size $\nu_i$
and $K_{\lambda,\nu}(t)$ is the Kostka-Foulkes polynomial defined by expanding the Schur function $s_{\lambda}$ 
 in terms of the dual Hall-Littlewood functions $P_{\mu}(t)=b_{\mu}^{-1}(t)Q_{\lambda}(t)$ ($t=q^{-1}$):
\begin{align*}
s_{\lambda}=\sum\limits_{\nu\vdash n}K_{\lambda,\nu}(t)P_{\nu}(t).
\end{align*}

Also the Frobenius formula for the Hecke algebra \cite{Ram} says that 
\begin{align}\label{e:Frob}
\frac{q^{|\mu|}}{(q-1)^{l(\mu)}}q_{\mu}(q^{-1})=\sum\limits_{\lambda\vdash n}\zeta^{\lambda}(T_{\gamma_{\mu}})s_{\lambda}.
\end{align}
Combining \eqref{e:KF} and \eqref{e:Frob}, we see that the bitrace is expressed as the matrix coefficient:
\begin{align}
btr(u_{\nu},T_{\gamma_{\mu}})=\frac{q^{|\mu|+n(\nu)}}{(q-1)^{l(\mu)}}\langle Q_{\nu}(q^{-1}), q_{\mu}(q^{-1}) \rangle.
\end{align}

Let $B^{\nu}_{\mu}(t)=\langle Q_{\nu}(t), q_{\mu}(t) \rangle$, 
by Theorem \ref{t:HL} it follows that
\begin{equation}\label{e:btr}
B^{\nu}_{\mu}(t)= \langle H_{\nu}.1, q_{\mu} \rangle.
\end{equation}
Thus we can use vertex operator technique to compute the bitrace as follows.

First of all, it is easy to see the operator product expansions as in \eqref{e:hallop1}-\eqref{e:hallop2}:
\begin{align}
\label{e:relations1}
H^*(z)q(w)&=q(w)H^*(z)\frac{z-tw}{z-w},
\\ \label{e:relations2}
q^{*}(z)H(w)&=H(w)q^{*}(z)\frac{z-tw}{z-w}.
\end{align}

Taking coefficients of $z^{-n}w^{m}$ in \eqref{e:relations1} and \eqref{e:relations2}, we get the following commutation relations.
\begin{prop}\label{p:rel2}
For any $m,n\in\mathbb{Z},$
\begin{align}\label{e:com3b}
H^{*}_{n}q_{m}=q_{m}H^{*}_{n}+(1-t)\sum\limits_{k=1}^{m}q_{m-k}H^*_{n-k},\\ \label{e:com4b}
q^{*}_{n}H_{m}=H_{m}q^{*}_{n}+(1-t)\sum\limits_{k=1}^{n}H_{m-k}q^*_{n-k}.
\end{align}
\end{prop}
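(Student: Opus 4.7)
My plan is to derive both commutation relations by extracting mode coefficients from the operator product expansions \eqref{e:relations1}--\eqref{e:relations2}. The first step is to expand the rational prefactor in the region $|w|<|z|$ as
$$\frac{z-tw}{z-w}=\frac{(z-w)+(1-t)w}{z-w}=1+(1-t)\sum_{k\geq 1}\left(\frac{w}{z}\right)^k,$$
a routine geometric series computation.

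For \eqref{e:com3b}, I would substitute this expansion into \eqref{e:relations1} and write $q(w)=\sum_{m'\geq 0}q_{m'}w^{m'}$, $H^*(z)=\sum_{n'\in\mathbb Z}H^*_{n'}z^{-n'}$. Comparing the coefficient of $z^{-n}w^m$, the left-hand side gives $H^*_n q_m$. On the right, $q(w)H^*(z)$ contributes $q_m H^*_n$, and the correction $(1-t)\sum_{k\geq 1}q(w)H^*(z)(w/z)^k$ contributes $(1-t)\sum_{k\geq 1}q_{m-k}H^*_{n-k}$ after matching $m'+k=m$ and $n'+k=n$. Since $q_{m'}$ vanishes for $m'<0$, the sum truncates at $k=m$, yielding \eqref{e:com3b}.

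For \eqref{e:com4b}, I would run the same argument on \eqref{e:relations2}, now expanding $H(w)=\sum_{m'\in\mathbb Z}H_{m'}w^{m'}$ and $q^*(z)=\sum_{n'\geq 0}q^*_{n'}z^{-n'}$. Comparing the coefficient of $z^{-n}w^{m}$ gives $q^*_n H_m = H_m q^*_n + (1-t)\sum_{k\geq 1}H_{m-k}q^*_{n-k}$, where the one-sided grading of $q^*$ forces $k\leq n$. Equivalently, one could apply the anti-involution $*$ to \eqref{e:com3b}, using $(H^*_n)^*=H_n$ and $(q_m)^*=q^*_m$, to recover \eqref{e:com4b} after relabeling indices.

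The computation is essentially bookkeeping and I anticipate no substantive obstacle. The only subtlety worth flagging is the need to fix a consistent expansion convention for $(z-w)^{-1}$ (as a power series in $w/z$); this convention, together with the one-sided gradings of $q$ and $q^*$, is precisely what forces the truncation of the $k$-sums at $m$ and $n$ respectively.
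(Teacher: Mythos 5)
Your proposal is correct and follows exactly the paper's route: the paper's proof consists precisely of taking the coefficient of $z^{-n}w^{m}$ in the operator product expansions \eqref{e:relations1}--\eqref{e:relations2}, with the geometric-series expansion of $\frac{z-tw}{z-w}$ in the region $|w|<|z|$ and the vanishing of $q_{m'}$ for $m'<0$ (resp.\ $q^*_{n'}$ for $n'<0$) accounting for the truncation of the sums, just as you describe.
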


Using the same method of Theorem \ref{t:iterative}, we immediately have the following result.
\begin{thm}\label{t:iterative2}
For partitions $\lambda,\mu\vdash n$ and integer number $k$,
\begin{align}\label{e:qH}
q_{k}^{*}H_{\nu}&=\sum\limits_{\tau\models k}(1-t)^{l(\tau)}H_{\nu-\tau}, \\ \label{e:Hq}
H^{*}_{k}q_{\mu}&=\sum\limits_{\tau\in\mathbb{Z}^{l}_{+}}(1-t)^{l(\tau)}q_{\mu-\tau}H^{*}_{k-\mid \tau \mid},
\end{align}
where $\mathbb{Z}_{+}$ is the set of non-negative integer.
\end{thm}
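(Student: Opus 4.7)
The plan is to mirror the inductive approach used for Theorem \ref{t:iterative}, with the commutation relations \eqref{e:com3b}--\eqref{e:com4b} of Proposition \ref{p:rel2} now playing the role that \eqref{e:rel1}--\eqref{e:rel2} did there. The two identities \eqref{e:qH} and \eqref{e:Hq} are morally adjoint under $*$, but each is most cleanly established by its own induction on the length of the partition indexing the creation operator ($l(\nu)$ for \eqref{e:qH}, $l(\mu)$ for \eqref{e:Hq}). Throughout I adopt the convention $l(\tau) = \#\{i : \tau_i > 0\}$, so that a zero coordinate contributes neither a part nor a factor of $(1-t)$.

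For \eqref{e:Hq}, I would induct on $l(\mu)$. The base case $l(\mu) = 1$ is precisely \eqref{e:com3b}, once we split the sum over $\tau = (j) \in \mathbb{Z}_+^1$ into the $j = 0$ term (the commute-through contribution $q_{\mu_1} H_k^*$, with $l(\tau) = 0$) and the $j \geq 1$ terms (the genuine commutator, each carrying one factor of $(1-t)$). For the inductive step, write $\mu = (\mu_1, \mu^{[1]})$, apply \eqref{e:com3b} to push $H_k^*$ past $q_{\mu_1}$,
\begin{align*}
H_k^* q_\mu = q_{\mu_1}\bigl(H_k^* q_{\mu^{[1]}}\bigr) + (1-t)\sum_{j=1}^{\mu_1} q_{\mu_1 - j}\bigl(H^*_{k-j} q_{\mu^{[1]}}\bigr),
\end{align*}
and apply the induction hypothesis to each parenthesized expression. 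Reindexing $\tau = (j, \tau')$ with $\tau' \in \mathbb{Z}_+^{l-1}$, and noting that $l(\tau) = l(\tau')$ when $j = 0$ while $l(\tau) = l(\tau') + 1$ when $j \geq 1$, the $(1-t)$ powers line up and the two pieces assemble into the single sum over $\tau \in \mathbb{Z}_+^l$ as claimed.

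The proof of \eqref{e:qH} runs in the same spirit but is to be read as an identity after pairing with the vacuum $1$, so that trailing operators $q^*_{k'}$ with $k' > 0$ drop out and the sum is forced to satisfy $|\tau| = k$. The base case $l(\nu) = 1$ reduces to $q_k^* H_{\nu_1}.1 = (1-t) H_{\nu_1 - k}.1$ for $k > 0$, matching the single composition $\tau = (k)$ of length one; the inductive step uses \eqref{e:com4b} to move $q_k^*$ past $H_{\nu_1}$, applies induction to each tail $q_{k-j}^* H_{\nu^{[1]}}.1$, and collects terms via the analogous reindexing. The only real technical obstacle in either proof is the bookkeeping for the $(1-t)^{l(\tau)}$ factors across the split $\tau = (j, \tau')$; everything else is routine operator manipulation directly parallel to the argument for \eqref{e:iterative}.
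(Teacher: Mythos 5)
Your proposal is correct and matches the paper's approach: the paper proves this theorem by exactly the inductive argument of Theorem \ref{t:iterative} with Proposition \ref{p:rel2} in place of Proposition \ref{p:rel1}, which is what you carry out. Your added care about the convention for $l(\tau)$ (zero parts contributing no factor of $(1-t)$) and about reading \eqref{e:qH} on the vacuum so that $q^*_{k-|\tau|}.1=\delta_{k,|\tau|}$ forces $|\tau|=k$ correctly fills in the details the paper leaves implicit.
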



Let $\mu$ be a composition and $\lambda$ be a partition. Recall \eqref{e:straight} and set
\begin{align}\label{e:def}
B(\lambda, \mu)\doteq\sum\limits_{\underline{i},\underline{a}}C(S_{\underline{i},\underline{a}})
\end{align}
summed over $\underline{i}=(i_1,i_2,\ldots,i_r)$, $\underline{a}=(a_1,a_2,\ldots,a_r)$ such that $S_{\underline{i},\underline{a}}\mu=\lambda.$

We remark that $B(\lambda, \mu)=0,$ unless $|\lambda|=|\mu|$. If $\mu_i+\mu_{i+1}+\cdots<0$ at any stage, then $B(\lambda, \mu)=0.$ And if $\mu$ is also a partition, then $B(\lambda, \mu)=\delta_{\lambda,\mu}.$ Let $\lambda, \nu$ be partitions and $\tau$ be a non-negative composition, then $B(\lambda, \nu-\tau)=0$ unless $\lambda\subset \nu.$
\begin{lem}\label{t:B}
Let $\lambda=(\lambda_1,\lambda_2,\ldots,\lambda_l), \nu=(\nu_1,\nu_2,\ldots,\nu_m)$ be partitions. If $l<m,$ $\lambda_i=\nu_i, i=1,2,\ldots,l$ and $\nu_l>\nu_{l+1},$ then we have
\begin{align*}
&\sum\limits_{\tau\models|\nu|-|\lambda|}(1-t)^{l(\tau)}B(\lambda, \nu-\tau)\\
&=\sum\limits_{\tau\models|\nu|-|\lambda|}(1-t)^{l(\tau)}B(\emptyset, (\nu_{l+1},\ldots,\nu_m)-\tau).
\end{align*}
\end{lem}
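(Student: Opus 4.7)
The plan is to recast both sides as inner products via Theorem~\ref{t:iterative2}, and then identify them as Pieri coefficients for Hall--Littlewood functions.

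Put $k=|\nu|-|\lambda|$ and $\mathrm{tail}=(\nu_{l+1},\ldots,\nu_m)$, which is itself a partition of $k$ under the hypotheses. Applying Theorem~\ref{t:iterative2}, the adjoint relation $\langle q_k^*X,Y\rangle=\langle X,q_kY\rangle$, and the orthogonality in Theorem~\ref{t:HL}, I would rewrite
\[
\mathrm{LHS}=\frac{1}{b_\lambda(t)}\,\langle q_k^*H_\nu\cdot 1,\,H_\lambda\cdot 1\rangle=\frac{1}{b_\lambda(t)}\,\langle Q_\nu(t),\,q_kQ_\lambda(t)\rangle,
\]
\[
\mathrm{RHS}=\langle q_k^*H_{\mathrm{tail}}\cdot 1,\,1\rangle=\langle Q_{\mathrm{tail}}(t),\,q_k\rangle=\langle Q_{\mathrm{tail}}(t),\,Q_{(k)}(t)\rangle,
\]
so it suffices to match these two inner products.

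Next, I would invoke the Pieri rule for Hall--Littlewood functions (\cite{M}, III.5), which in the $Q$-normalization reads
$q_kQ_\lambda(t)=\sum_\mu \frac{b_\lambda(t)}{b_\mu(t)}\varphi_{\mu/\lambda}(t)\,Q_\mu(t)$
summed over partitions $\mu\supset\lambda$ whose skew $\mu/\lambda$ is a horizontal strip of $k$ cells. Combined with orthogonality this yields $\mathrm{LHS}=\varphi_{\nu/\lambda}(t)$, while $\mathrm{RHS}=\delta_{\mathrm{tail},(k)}\,(1-t)$.

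The hypothesis $\lambda_i=\nu_i$ for $i\leq l$ forces the skew $\nu/\lambda$ to consist of the shape $\mathrm{tail}$ placed in rows $l+1,\ldots,m$, so its first column has $m-l$ cells; hence $\nu/\lambda$ is a horizontal strip iff $m=l+1$. In the case $m>l+1$, both sides therefore vanish. In the case $m=l+1$, we have $\mathrm{tail}=(k)$, and $\nu/\lambda$ is the single row of length $k$ in row $l+1$; the explicit formula $\varphi_{\nu/\lambda}(t)=1-t^{m_k(\nu)}$ with $m_k(\nu)=m_k(\lambda)+1$ applies, and the strict inequality $k=\nu_{l+1}<\nu_l=\lambda_l$ gives $k\notin\lambda$ so that $m_k(\lambda)=0$. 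Thus $\varphi_{\nu/\lambda}(t)=1-t$ and both sides agree.

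The main subtlety is the evaluation of the Pieri coefficient when $\lambda$ has repeated parts; the strict hypothesis $\nu_l>\nu_{l+1}$ is precisely what guarantees $m_k(\lambda)=0$, collapsing $\varphi_{\nu/\lambda}$ to its minimal value $1-t$. Without this strict inequality, additional $t$-binomial factors from $m_k(\lambda)\geq 1$ would appear and spoil the identity.
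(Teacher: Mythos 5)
Your proof is correct, but it takes a genuinely different route from the paper's. The paper treats Lemma \ref{t:B} as an immediate consequence of the definition \eqref{e:def} and the remark preceding the lemma, i.e.\ a direct term-by-term argument on the straightening coefficients: under the hypothesis $\nu_l>\nu_{l+1}$ the straightening of $H_{\nu-\tau}$ toward $\lambda$ cannot mix the first $l$ positions (which already form $\lambda$) with the tail, so each $B(\lambda,\nu-\tau)$ either vanishes or equals the corresponding $B(\emptyset,(\nu_{l+1},\ldots,\nu_m)-\tau)$. You instead recognize both sides as coefficients in the expansion of $q_k^*H_\nu.1$ (resp.\ $q_k^*H_{\mathrm{tail}}.1$) in the orthogonal basis $\{H_\mu.1\}$ --- note that this identification uses Proposition \ref{p:straight} in addition to Theorem \ref{t:iterative2}, i.e.\ it is precisely \eqref{e:qH2}, so you should cite that as well --- and then evaluate them by adjointness, orthogonality \eqref{e:orth}, and the Hall--Littlewood Pieri rule of \cite[III, (5.7)--(5.8)]{M}. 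Your computation is accurate: the left side is $\varphi_{\nu/\lambda}(t)$, the right side is $\delta_{\mathrm{tail},(k)}(1-t)$, and both equal $(1-t)\delta_{m,l+1}$ under the hypotheses. What your approach buys is an explicit closed form for the common value and a transparent accounting of where the strict inequality $\nu_l>\nu_{l+1}$ enters (it forces $m_k(\nu)=1$, collapsing $\varphi_{\nu/\lambda}$ to $1-t$; indeed for $\nu=(1,1)$, $\lambda=(1)$ the two sides are $1-t^2$ and $1-t$, so the hypothesis is genuinely needed). What it costs is the appeal to the external Pieri formula, whereas the paper's intended argument stays entirely inside the combinatorics of the $B(\lambda,\mu)$. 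There is no circularity, since you rely only on results established before the lemma.
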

\begin{proof} This follows directly from \eqref{e:def} and the remark below it.
\end{proof}

Recall that $\mu\models n$ means $\mu$ is a composition with weight $n$. In this case, we can rewrite \eqref{e:straight2} as ($\mu\vDash n$):
\begin{align}
H_{\mu}=\sum\limits_{\lambda\vdash n}B(\lambda, \mu)H_{\lambda}.
\end{align}
For $\nu\vdash n$ and using \eqref{e:qH} it follows that
\begin{align}\label{e:qH2}
q_{k}^{*}H_{\nu}&=\sum\limits_{\lambda\in \mathcal{P}_{n-k}^{\nu}}\sum\limits_{\tau\models k}(1-t)^{l(\tau)}B(\lambda, \nu-\tau)H_{\lambda}.1
\end{align}
where $\mathcal{P}_{n}^{\nu}$ is the set $\{\lambda\vdash n\mid \lambda\subset \nu\}$. Note that $\lambda$ appears in \eqref{e:qH2} only when $\nu/\lambda$ is a horizontal $k$-strip (cf. \cite[III (5.7)]{M}), so we have proved the following result.

\begin{thm}\label{t:Biterative}
Let $\mu,\nu\vdash n,$ then the following iterative formula holds. 
\begin{align}\label{e:Biterative}
B^{\nu}_{\mu}(t)=\sum\limits_{\lambda\in \mathcal{P}_{n-\mu_1}^{\nu}}\sum\limits_{\tau\models \mu_1}(1-t)^{l(\tau)}B(\lambda, \nu-\tau)B^{\lambda}_{\mu^{[1]}}(t).
\end{align}
\end{thm}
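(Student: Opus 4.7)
The plan is to compute $B^{\nu}_{\mu}(t)=\langle H_{\nu}.1, q_{\mu}\rangle$ by peeling off the first factor of $q_{\mu}=q_{\mu_1}q_{\mu^{[1]}}$ using the adjoint operator $q^*_{\mu_1}$ and then invoking the already-established formula \eqref{e:qH2}. Concretely, I would begin from the identity
\begin{align*}
B^{\nu}_{\mu}(t)=\langle H_{\nu}.1,\, q_{\mu_1}q_{\mu^{[1]}}\rangle=\langle q^*_{\mu_1}H_{\nu}.1,\, q_{\mu^{[1]}}\rangle,
\end{align*}
which uses only the definition \eqref{e:btr} of $B^{\nu}_{\mu}(t)$ together with the $*$-adjointness of $q_n$ with respect to the Hall-Littlewood bilinear form (a fact parallel to $\langle H_nu,v\rangle=\langle u,H^*_nv\rangle$ and consistent with the commutation relation \eqref{e:com4b} that exhibits $q^*_n$ as a degree $-n$ operator).

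Next I would apply \eqref{e:qH2} directly to $q^*_{\mu_1}H_{\nu}.1$, giving
\begin{align*}
q^*_{\mu_1}H_{\nu}.1=\sum_{\lambda\in\mathcal{P}^{\nu}_{n-\mu_1}}\sum_{\tau\models\mu_1}(1-t)^{l(\tau)}B(\lambda,\nu-\tau)H_{\lambda}.1.
\end{align*}
Substituting this into the previous display, pulling the scalar coefficients out of the inner product, and using the definition \eqref{e:btr} once more to recognize $\langle H_{\lambda}.1, q_{\mu^{[1]}}\rangle=B^{\lambda}_{\mu^{[1]}}(t)$ yields the claimed formula \eqref{e:Biterative}.

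The nontrivial content is really packaged into \eqref{e:qH2}, which in turn rests on two earlier inputs: Theorem \ref{t:iterative2}, which expands $q^*_k H_{\nu}$ into a sum of vertex operator products $H_{\nu-\tau}$ indexed by compositions $\tau\models k$, and the straightening Proposition \ref{p:straight}, which rewrites each $H_{\nu-\tau}$ as a $B(\lambda,\nu-\tau)$-weighted sum over partitions $\lambda$. The restriction to $\lambda\subset\nu$ that defines $\mathcal{P}^{\nu}_{n-\mu_1}$ comes from the remark below \eqref{e:def} that $B(\lambda,\nu-\tau)=0$ unless $\lambda\subset\nu$. The main obstacle, already handled in \eqref{e:qH2}, is bookkeeping the straightening coefficients correctly through the commutation of $q^*$ past $H$; once that is in hand, the present theorem reduces to one application of adjointness followed by bilinear expansion.
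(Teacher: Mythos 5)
Your proposal is correct and follows exactly the route of the paper, whose proof is the one-line observation that the theorem follows from \eqref{e:qH2} and \eqref{e:btr}; you have simply made explicit the adjointness step $\langle H_{\nu}.1, q_{\mu_1}q_{\mu^{[1]}}\rangle=\langle q^*_{\mu_1}H_{\nu}.1, q_{\mu^{[1]}}\rangle$ and the bilinear expansion that the authors leave implicit.
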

\begin{proof}
This follows from \eqref{e:qH2} and \eqref{e:btr}.
\end{proof}

We list some of the special cases of Theorem \ref{t:Biterative}. 
\begin{exmp}
Let $\mu,\nu\vdash n,$ we have
\begin{align}\label{e:(n)}
B^{(n)}_{\mu}(t)&=(1-t)^{l(\mu)},\\\label{e:,(n)}
B^{\nu}_{(n)}(t)&=(1-t)\delta_{\nu,(n)},\\\label{e:(1^n),}
B^{(1^n)}_{\mu}(t)&=\delta_{\mu,(1^n)}\prod\limits_{i=1}^{n}(1-t^i),\\\label{e:,(1^n)}
B^{\nu}_{(1^n)}(t)&=(1-t)^nX^{\nu}_{(1^n)}(t),\\
\label{e:,two}
B^{\nu}_{(\mu_1,\mu_2)}(t)&=
\begin{cases}
(1-t)^{3-\delta_{\mu_1,\nu_2}-\delta_{\nu_2,0}}+t(1-t)^2\delta_{\mu_2,\nu_2}& \text{if $\nu\geq(\mu_1,\mu_2)$}\\
0&\text{if others.}
\end{cases}
\end{align}
\end{exmp}
\begin{proof}
\eqref{e:(n)}, \eqref{e:(1^n),}, and \eqref{e:,two}  follows from \eqref{e:Biterative} by easy induction. \eqref{e:,(n)} holds by \eqref{e:btr} and \eqref{e:orth}. \eqref{e:,(1^n)} holds by \eqref{e:btr}, \eqref{e:qfcn2} and \eqref{e:greenp2}.
\end{proof}

\begin{cor}
Let $\nu,\mu\vdash n$, $\mu>\nu,$ we have
\begin{align}
B^{\nu}_{\mu}(t)=0.
\end{align}
\end{cor}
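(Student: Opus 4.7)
The plan is to induct on $l(\mu)$, interpreting $\mu > \nu$ as strict dominance. The base case $l(\mu) = 1$ is immediate: $\mu = (n)$ and $\mu > \nu$ force $\nu \neq (n)$, so \eqref{e:,(n)} gives $B^{\nu}_{\mu}(t) = 0$. For the inductive step with $l(\mu) \geq 2$, Theorem \ref{t:Biterative} yields
\begin{equation*}
B^{\nu}_{\mu}(t) = \sum_{\lambda \in \mathcal{P}_{n-\mu_{1}}^{\nu}} c_{\lambda}(t)\, B^{\lambda}_{\mu^{[1]}}(t), \qquad c_{\lambda}(t) := \sum_{\tau \models \mu_{1}} (1-t)^{l(\tau)}\, B(\lambda, \nu-\tau),
\end{equation*}
and the remark preceding Theorem \ref{t:Biterative} tells us that $c_{\lambda}(t) = 0$ unless $\nu/\lambda$ is a horizontal $\mu_{1}$-strip. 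It thus suffices to establish the following key lemma: \emph{if $\mu > \nu$ in dominance and $\nu/\lambda$ is a horizontal $\mu_{1}$-strip, then $\mu^{[1]} > \lambda$ in dominance}; with it in hand, the inductive hypothesis forces $B^{\lambda}_{\mu^{[1]}}(t) = 0$ for every contributing $\lambda$.

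To prove the key lemma, put $r_{j} = \nu_{j} - \lambda_{j} \geq 0$, so $\sum_{j} r_{j} = \mu_{1}$ and the horizontal strip condition reads $r_{j} \leq \nu_{j} - \nu_{j+1}$. A direct rearrangement gives, for each $k \geq 1$,
\begin{equation*}
\sum_{j=1}^{k} \mu^{[1]}_{j} - \sum_{j=1}^{k} \lambda_{j} = \Bigl[\sum_{j=1}^{k+1}(\mu_{j} - \nu_{j})\Bigr] + \Bigl[\nu_{k+1} - \sum_{j > k} r_{j}\Bigr].
\end{equation*}
The first bracket is nonnegative by $\mu \geq \nu$; the second by the telescoping bound $\sum_{j>k} r_{j} \leq \sum_{j>k}(\nu_{j} - \nu_{j+1}) = \nu_{k+1}$. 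Hence $\mu^{[1]} \geq \lambda$. For strictness, suppose $\mu^{[1]} = \lambda$; then both brackets vanish for every $k \geq 1$. Telescoping in the second bracket yields $r_{j} = \nu_{j} - \nu_{j+1}$ for $j \geq 2$, so $r_{1} = \mu_{1} - \nu_{2}$; combining the horizontal strip bound $r_{1} \leq \nu_{1} - \nu_{2}$ with the dominance $\mu_{1} \geq \nu_{1}$ yields $\mu_{1} = \nu_{1}$ and $r_{1} = \nu_{1} - \nu_{2}$. Consequently $\lambda_{j} = \nu_{j+1}$ for all $j$, i.e.\ $\lambda = \nu^{[1]}$; together with $\mu^{[1]} = \lambda$ and $|\mu| = |\nu|$, this forces $\mu = \nu$, contradicting $\mu > \nu$.

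The main obstacle I foresee is the equality case of the key lemma: ruling out $\mu^{[1]} = \lambda$ requires the horizontal strip condition in an essential way—mere containment $\lambda \subset \nu$ cannot control $\sum_{j>k} r_{j}$, and extracting $\mu_{1} = \nu_{1}$ uses both the dominance hypothesis and the strip bound on $r_{1}$ simultaneously.
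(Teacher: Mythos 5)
Your proof is correct, and it takes a genuinely different route from the paper's. Both arguments induct on $l(\mu)$ with the same base case \eqref{e:,(n)}, and your reading of $\mu>\nu$ as strict dominance is the right one (it is what the tables and the triangularity of $q_\mu$ in the $P_\nu$ basis reflect). The inductive steps diverge, however. The paper exploits the commutativity $q_mq_n=q_nq_m$ to apply the iteration \eqref{e:Biterative} with the \emph{smallest} part $\mu_l$ stripped off first; the induction hypothesis then eliminates all intermediate $\lambda$ except $(\mu_1,\ldots,\mu_{l-1})$, forcing $\nu_i=\mu_i$ for $i<l$, after which Lemma \ref{t:B} collapses the remaining sum to the one-row quantity $B^{(\nu_l,\nu_{l+1},\ldots)}_{(\mu_l)}(t)=0$. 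You instead strip off the \emph{largest} part $\mu_1$, exactly as \eqref{e:Biterative} is stated, and the combinatorial burden is carried by your key lemma: removing a horizontal $\mu_1$-strip from $\nu$ leaves a partition strictly dominated by $\mu^{[1]}$. Your rearrangement identity checks out, both brackets are nonnegative for the stated reasons, and your handling of the equality case is complete --- and you are right that this is the only delicate point, since mere containment $\lambda\subset\nu$ would not rule out $\mu^{[1]}=\lambda$. As for what each approach buys: the paper's version reuses its own machinery (Lemma \ref{t:B} plus the commutativity trick) but passes rather tersely over why only $\lambda=(\mu_1,\ldots,\mu_{l-1})$ survives; your version is self-contained, needs only the horizontal-strip observation already recorded before Theorem \ref{t:Biterative}, and isolates a reusable combinatorial fact (strict dominance is preserved under stripping a horizontal strip of the top part) that makes the triangularity transparent.
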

\begin{proof}
We argue by induction on $l(\mu)$. The case $l(\mu)=1$ is \eqref{e:,(n)}. Suppose it holds for all $l(\mu)<l$
and consider $\mu=(\mu_1,\mu_2,\ldots,\mu_l), \nu<\mu$. Since $q_mq_n=q_nq_m,$ by \eqref{e:Biterative}, we have
\begin{align*}
B^{\nu}_{\mu}(t)=\sum\limits_{\lambda\in \mathcal{P}_{n-\mu_l}^{\nu}}\sum\limits_{\tau\models \mu_l}(1-t)^{l(\tau)}B(\lambda, \nu-\tau)B^{\lambda}_{(\mu_1,\ldots,\mu_{l-1})}(t)
\end{align*}
By induction hypothesis and the remark above Lemma \ref{t:B}, we have
\begin{align*}
B^{\nu}_{\mu}(t)=\prod\limits_{i=1}^{l-1}\delta_{\nu_i,\mu_i}\sum\limits_{\tau\models \mu_l}(1-t)^{l(\tau)}B((\mu_1,\ldots,\mu_{l-1}), \nu-\tau)B^{(\mu_1,\ldots,\mu_{l-1})}_{(\mu_1,\ldots,\mu_{l-1})}(t)
\end{align*}
It suffices to consider the case $\nu_i=\mu_i, i=1,2,\ldots,l-1.$ Then we have $\nu_l<\mu_l, \nu_{l+1}>0,$ so
for $\bar{\mu}=(\mu_1, \ldots, \mu_{l-1})$
\begin{align*}
B^{\nu}_{\mu}(t)&=\sum\limits_{\tau\models \mu_l}(1-t)^{l(\tau)}B(\bar{\mu}, (\bar{\mu},\nu_l,\cdots)-\tau)B^{\bar{\mu}}_{\bar{\mu}}(t)\\
&=\sum\limits_{\tau\models \mu_l}(1-t)^{l(\tau)}B(\emptyset, (\nu_l,\nu_{l+1},\cdots)-\tau)B^{\bar{\mu}}_{\bar{\mu}}(t) \quad(\text{by Lemma \ref{t:B}})\\
&=B^{(\nu_l,\nu_{l+1},\cdots)}_{(\mu_l)}B^{\bar{\mu}}_{\bar{\mu}}(t)\\
&=0.
\end{align*}
\end{proof}

{\bf Tables for $btr(u_{\nu},T_{\gamma_{\mu}}), n\leq 5$}. Here $[n]=1+\cdots+q^{n-1}$

\begin{table}[H]

\caption{\label{tab:2}n=2}

 \begin{tabular}{|c|c|c|c|}

  \hline

 \tabincell{c}{$\mu\backslash \nu$ } & $(2)$ & $(1^2)$ \\

  \hline

$(2)$ & \tabincell{c}{$q$} & \tabincell{c}{$0$} \\

\hline

$(1^2)$ & \tabincell{c}{$1$} & \tabincell{c}{$[2]$} \\

\hline

 \end{tabular}

\end{table}

\begin{table}[H]

 \centering

\caption{\label{tab:3}n=3}

 \begin{tabular}{|c|c|c|c|c|}

  \hline

 \tabincell{c}{$\mu\backslash \nu$ } & $(3)$ & $(2,1)$ & $(1^3)$ \\

  \hline

$(3)$ & \tabincell{c}{$q^2$} & \tabincell{c}{$0$} & \tabincell{c}{$0$}  \\

\hline

$(2,1)$ & \tabincell{c}{$q$} & \tabincell{c}{$q^2$} & \tabincell{c}{$0$} \\

\hline
$(1^3)$   & \tabincell{c}{$1$}    & \tabincell{c}{$2q+1$}   & \tabincell{c}{$\prod\limits_{i=1}^3[i]$}   \\

  \hline

 \end{tabular}

\end{table}

\begin{table}[H]

 \centering

\caption{\label{tab:4}n=4}

 \begin{tabular}{|c|c|c|c|c|c|c|}

  \hline

 \tabincell{c}{$\mu\backslash \nu$ } & $(4)$ & $(3,1)$ & $(2^2)$ & $(2,1^2)$ & $(1^4)$ \\

  \hline

$(4)$ & \tabincell{c}{$q^3$} & \tabincell{c}{$0$} & \tabincell{c}{$0$} & \tabincell{c}{$0$} & \tabincell{c}{$0$}  \\

\hline

$(3,1)$ & \tabincell{c}{$q^2$} & \tabincell{c}{$q^3$} & \tabincell{c}{$0$} & \tabincell{c}{$0$} & \tabincell{c}{$0$}  \\

\hline

$(2^2)$ & \tabincell{c}{$q^2$}  & \tabincell{c}{$q^3-q^2$}  & \tabincell{c}{$q^4+q^3$}  & \tabincell{c}{$0$}  & \tabincell{c}{$0$} \\

  \hline
$(2,1^2)$  & \tabincell{c}{$q$}    & \tabincell{c}{$2q^2$}    & \tabincell{c}{$q^3+q^2$}    & \tabincell{c}{$q^4+q^3$}  & \tabincell{c}{$0$}  \\

  \hline
$(1^4)$   & \tabincell{c}{$1$}    & \tabincell{c}{$3q+1$}    & \tabincell{c}{$(2q+1)[2]$}    & \tabincell{c}{$(3q^2+2q+1)[2]$}  & \tabincell{c}{$\prod\limits_{i=1}^4[i]$}   \\

  \hline

 \end{tabular}

\end{table}

\begin{table}[H]

 \centering

\caption{\label{tab:5}n=5}

 \begin{tabular}{|c|c|c|c|c|c|c|c|c|}

  \hline

 \tabincell{c}{$\mu\backslash \nu$ } & $(5)$ & $(4,1)$ & $(3,2)$ & $(3,1^2)$ & $(2^2,1)$ & $(2,1^3)$ & $(1^5)$ \\

  \hline

$(5)$ & \tabincell{c}{$q^4$} & \tabincell{c}{$0$} & \tabincell{c}{$0$} & \tabincell{c}{$0$} & \tabincell{c}{$0$} & \tabincell{c}{$0$} & \tabincell{c}{$0$} \\

\hline

$(4,1)$ & \tabincell{c}{$q^3$} & \tabincell{c}{$q^4$} & \tabincell{c}{$0$} & \tabincell{c}{$0$} & \tabincell{c}{$0$} & \tabincell{c}{$0$} & \tabincell{c}{$0$} \\

\hline

$(3,2)$ & \tabincell{c}{$q^3$}  & \tabincell{c}{$q^4-q^3$}  & \tabincell{c}{$q^5$}  & \tabincell{c}{$0$}  & \tabincell{c}{$0$}  & \tabincell{c}{$0$}  & \tabincell{c}{$0$} \\

\hline

$(3,1^2)$ & \tabincell{c}{$q^2$}  & \tabincell{c}{$2q^3$}  & \tabincell{c}{$q^4$}  & \tabincell{c}{$q^5+q^4$}  & \tabincell{c}{$0$}  & \tabincell{c}{$0$}  & \tabincell{c}{$0$} \\

  \hline
$(2^2,1)$  & \tabincell{c}{$q^2$}    & \tabincell{c}{$2q^3-q^2$}    & \tabincell{c}{$2q^4$}    & \tabincell{c}{$q^5-q^3$}    & \tabincell{c}{$q^5[2]$}    & \tabincell{c}{$0$}    & \tabincell{c}{$0$}  \\

  \hline
$(2,1^3)$  & \tabincell{c}{$q$}    & \tabincell{c}{$3q^2$}    & \tabincell{c}{$3q^3+q^2$}    & \tabincell{c}{$3q^3[2]$}    & \tabincell{c}{$q^3(2q+1)[2]$}    & \tabincell{c}{$q^4[3][2]$}    & \tabincell{c}{$0$}  \\

  \hline
$(1^5)$   & \tabincell{c}{$1$}    & \tabincell{c}{$4q+1$}    & \tabincell{c}{$5q^2+4q+1$}    & \tabincell{c}{$(6q^2+3q$\\ $+1)[2]$}    & \tabincell{c}{$(5q^3+6q^2$\\ $+3q+1)[2]$}    & \tabincell{c}{$(4q^3+3q^2+$\\ $2q+1)[3][2]$}    & \tabincell{c}{$\prod\limits_{i=1}^5[i]$}   \\

  \hline

 \end{tabular}

\end{table}

\vskip30pt \centerline{\bf Acknowledgments}
The project is partially supported by
Simons Foundation grant No. 523868 and NSFC grant No. 11531004.
\bigskip

\bibliographystyle{plain}

\end{document}